\newcommand{\R}{{\mathbb R}}
\newcommand{\norm}[1]{\left\Vert#1\right\Vert}
\newcommand\inner[2]{\langle #1, #2 \rangle}
\newtheorem{theorem}{Theorem}[section]
\newtheorem{proposition}[theorem]{Proposition}
\newtheorem{lemma}[theorem]{Lemma}
\newtheorem{corollary}[theorem]{Corollary}
\newtheorem*{lemma*}{Lemma}
\theoremstyle{definition}
\newtheorem{remark}[theorem]{Remark}
\newtheorem{definition}[theorem]{Definition}
\newtheorem{example}[theorem]{Example}
\DeclareMathOperator{\dist}{dist}
\DeclareMathOperator{\argmin}{argmin}
\title{A speed restart scheme for a dynamics with Hessian driven damping}
\author{Juan José Maulén\footnote{Mathematical Engineering Department, Center for Mathematical Modeling (CNRS IRL2807), University of Chile,
		Santiago, Chile. Bernoulli Institute for Mathematics, Computer Science
		and Artificial Intelligence, Faculty of Science and Engineering,
		University of Groningen, Groningen, The Netherlands}, Juan Peypouquet\footnote{Bernoulli Institute for Mathematics, Computer Science
		and Artificial Intelligence, Faculty of Science and Engineering,
		University of Groningen, Groningen, The Netherlands} }
\begin{document}
	
	\maketitle
	\begin{abstract}
	In this paper, we analyze a speed restarting scheme for the dynamical system given by
	$$
	\ddot{x}(t) + \dfrac{\alpha}{t}\dot{x}(t) + \nabla \phi(x(t)) + \beta \nabla^2 \phi(x(t))\dot{x}(t)=0,   
	$$
	where $\alpha$ and $\beta$ are positive parameters, and $\phi:\R^n \to \R$ is a smooth convex function. If $\phi$ has quadratic growth, we establish a linear convergence rate for the function values along the restarted trajectories. As a byproduct, we improve the results obtained by Su, Boyd and Candès \cite{JMLR:v17:15-084}, obtained in the strongly convex case for $\alpha=3$ and $\beta=0$. Preliminary numerical experiments suggest that both adding a positive Hessian driven damping parameter $\beta$, and implementing the restart scheme help improve the performance of the dynamics and corresponding iterative algorithms as means to approximate minimizers of $\phi$. \\
	
	\textbf{Keywords:} Convex optimization $\cdot$ Hessian driven damping $\cdot$ First order methods $\cdot$ Restarting $\cdot$ Differential equations. \\
	
	\textbf{MSC2020:} 37N40 $\cdot$ 90C25 $\cdot$ 65K10 (primary). 34A12 (secondary).
	

\end{abstract}
	\section{Introduction}

In convex optimization, first-order methods are iterative algorithms that use function values and (generalized) derivatives to build minimizing sequences. Perhaps the oldest and simplest of them is the {\it gradient method} \cite{cauchy1847methode}, which can be interpreted as a finite-difference discretization of the differential equation
\begin{equation} \label{eq:grad}
	\dot x(t)+\nabla\phi\big(x(t)\big)=0,
\end{equation}
describing the {\it steepest descent dynamics}. The gradient method is applicable to smooth functions, but there are more contemporary variations that can deal with nonsmooth ones, and even exploit the functions' structure to enhance the algorithm's {\it per iteration} complexity, or overall performance. A keynote example (see \cite{Beck_book} for further insight) is the {\it proximal-gradient} (or {\it forward-backward}) method  
\cite{LionsMercier,Passty}, (see also \cite{martinet1970regularisation,rockafellar1976monotone}), which is in close relationship with a nonsmooth version of \eqref{eq:grad}. In any case, the analysis of related differential equations or inclusions is a valuable source of insight into the dynamic behavior of these iterative algorithms. \\

In \cite{polyakInertial,Nesterov1983AMF}, the authors introduced inertial substeps in the iterations of the gradient method, in order to accelerate its convergence. This variation improves the worst-case convergence rate from $\mathcal O(k^{-1})$ to $\mathcal O(k^{-2})$. In the strongly convex case, the constants that describe the linear convergence rate are also improved. This method  was extended to the proximal-gradient case in \cite{beck2009fast}, and to fixed point iterations \cite{krasnosel1955two,mann1953mean} (see, for example \cite{mainge2008convergence,lorenz2015inertial,attouch2019convergence,AttouchCabot2019,fierro2022inertial}, among others). In \cite{JMLR:v17:15-084}, Su, Boyd and Candès showed that Nesterov's inertial gradient algorithm$-$and, analogously, the proximal variant$-$can be interpreted as a discretization of the ordinary differential equation with Asymptotically Vanishing Damping
\begin{equation}\label{eq:avd_intr}\tag{AVD}
\ddot{x}(t) + \dfrac{\alpha}{t}\dot{x}(t) + \nabla \phi(x(t)) =0, 
\end{equation}
where $\alpha>0$. The function values vanish along the trajectories \cite{attouch2018fast}, and they do so at a rate of $\mathcal{O}(t^{-2})$ for $\alpha \geq 3$ \cite{JMLR:v17:15-084}, and $o(t^{-2})$ for $\alpha > 3$ \cite{may2017asymptotic}, in the worst-case scenario. \\


Despite its faster convergence rate guarantees, trajectories satisfying \eqref{eq:avd_intr}$-$as well as sequences generated by inertial first order methods$-$exhibit a somewhat chaotic behavior, especially if the objective function is ill-conditioned. In particular, the function values tend not to decrease monotonically, but to present an oscillatory character, instead.

\begin{example} \label{EG:AVD_R3}
We consider the quadratic function $\phi:\R^3\to\R$, defined by 
\begin{equation} \label{eq:Phi_R3}
	\phi(x_1,x_2,x_3)=\frac{1}{2}(x_1^2+\rho x_2^2+\rho^2x_3^2),
\end{equation}
whose condition number is $\max\{\rho^2,\rho^{-2}\}$. Figure \ref{fig:AVD} shows the behavior of the solution to \eqref{eq:avd_intr}, with $x(1)=(1,1,1)$ and $\dot x(1)=-\nabla \phi\big(x(1)\big)$ (the direction of maximum descent).
\end{example}

\begin{figure}[h]
\centering
\includegraphics[width=0.4\textwidth]{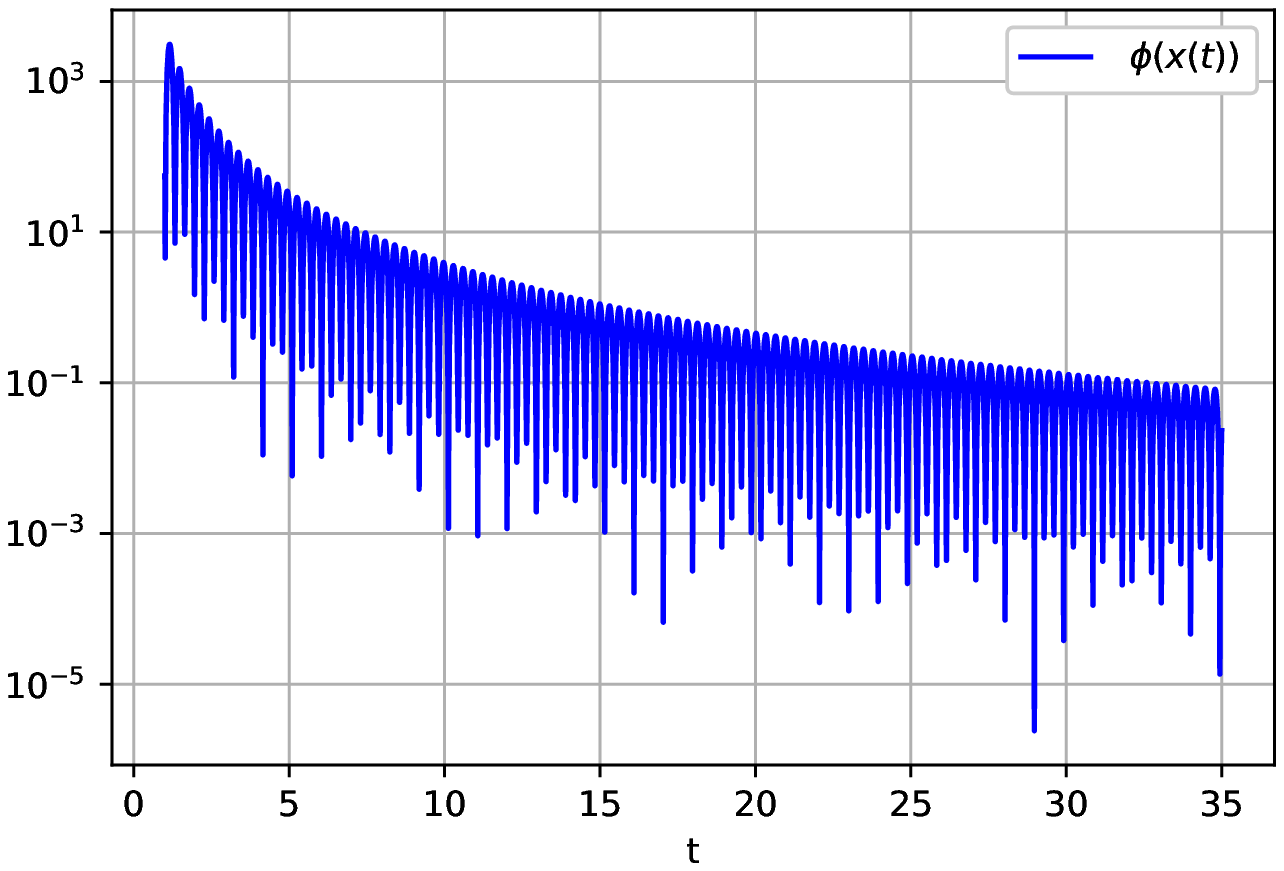}
\includegraphics[width=0.4\textwidth]{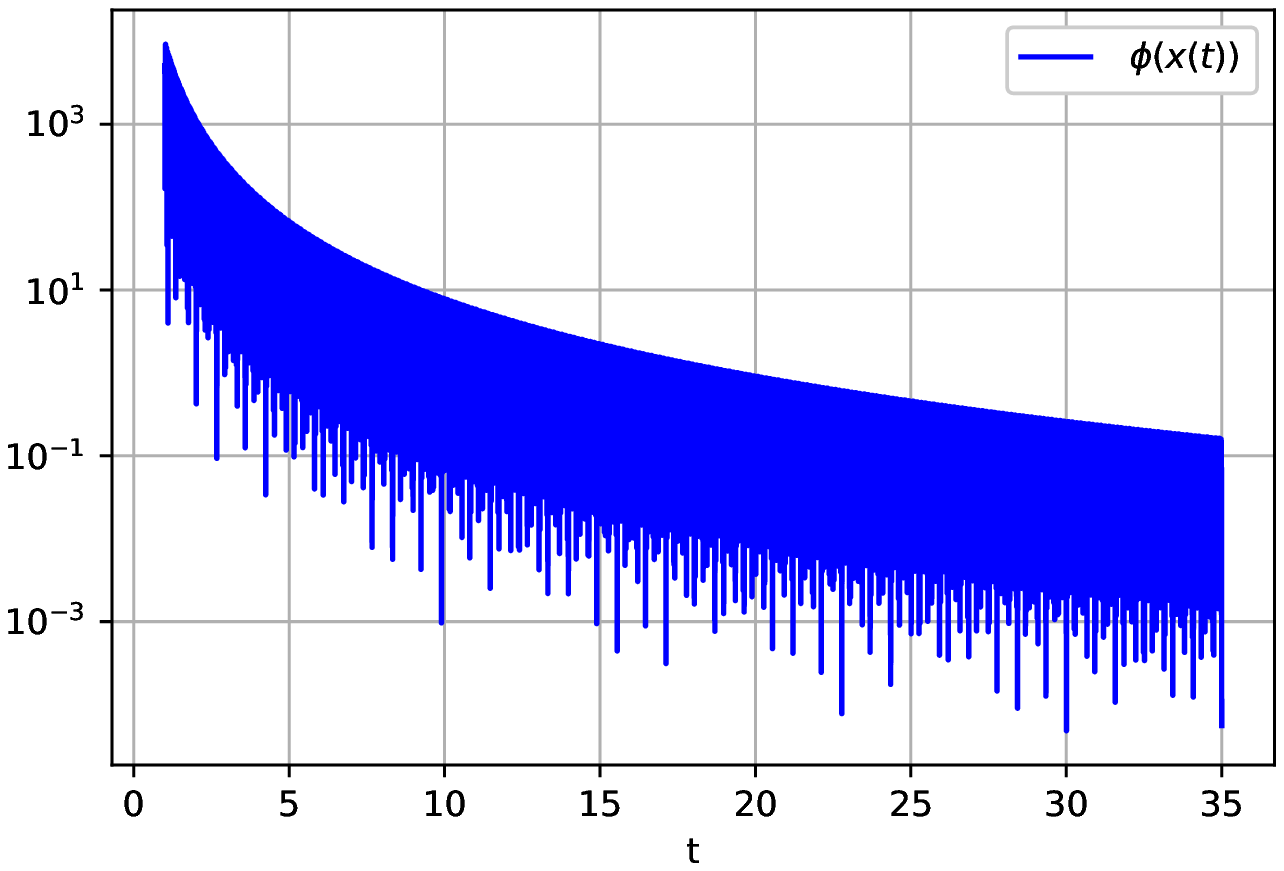}
\caption{Depiction of the function values according to Example \ref{EG:AVD_R3}, on the interval $[1,35]$, for $\alpha=3.1$, and $\rho=10$ (left) and $\rho=100$ (right).}
\label{fig:AVD}
\end{figure}

In order to avoid this undesirable behavior, and partly inspired by a continuous version of Newton's method \cite{alvarez1998dynamical},  Attouch, Peypouquet and Redont \cite{attouch2016fast} proposed a Dynamic Inertial Newton system with Asymptotically Vanishing Damping, given by
\begin{equation}\label{eq:din_avd}\tag{DIN-AVD}
\ddot{x}(t) + \dfrac{\alpha}{t}\dot{x}(t) + \nabla \phi(x(t)) + \beta \nabla^2 \phi(x(t))\dot{x}(t)=0, 
\end{equation}
where $\alpha,\beta>0$. In principle, this expression only makes sense when $\phi$ is twice differentiable, but the authors show that it can be transformed into an equivalent first-order equation in time and space, which can be extended to a differential inclusion that is well posed whenever $\phi$ is closed and convex.
The authors presented \eqref{eq:din_avd} as a continuous-time model for the design of new algorithms, a line of research already outlined in \cite{attouch2016fast}, and continued in \cite{attouch2020first}. Back to \eqref{eq:din_avd}, the function values vanish along the solutions, with the same rates as for \eqref{eq:avd_intr}. Nevertheless, in contrast with the solutions of \eqref{eq:avd_intr}, the oscillations are tame.

\begin{example} \label{EG:AVD_GIN_R3}
In the context of Example \ref{EG:AVD_R3}, Figure \ref{fig:AVD_GIN} shows the behavior of the solution to \eqref{eq:din_avd} in comparison with that of \eqref{eq:din_avd}, both with $x(1)=(1,1,1)$ and $\dot x(1)=-\nabla \phi\big(x(1)\big)$. 
\end{example}

\begin{figure}[h!]
\centering
\includegraphics[width=0.4\textwidth]{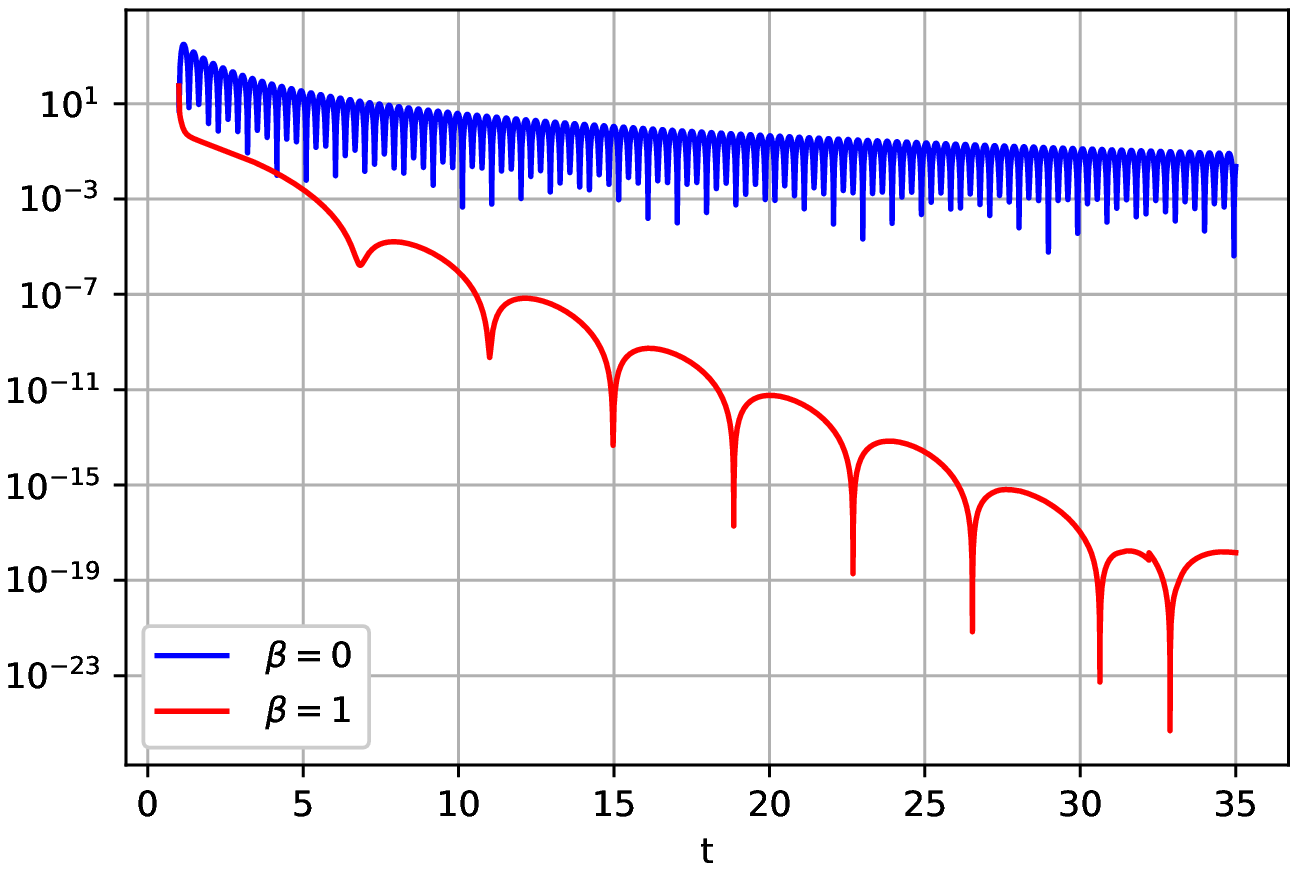}
\includegraphics[width=0.4\textwidth]{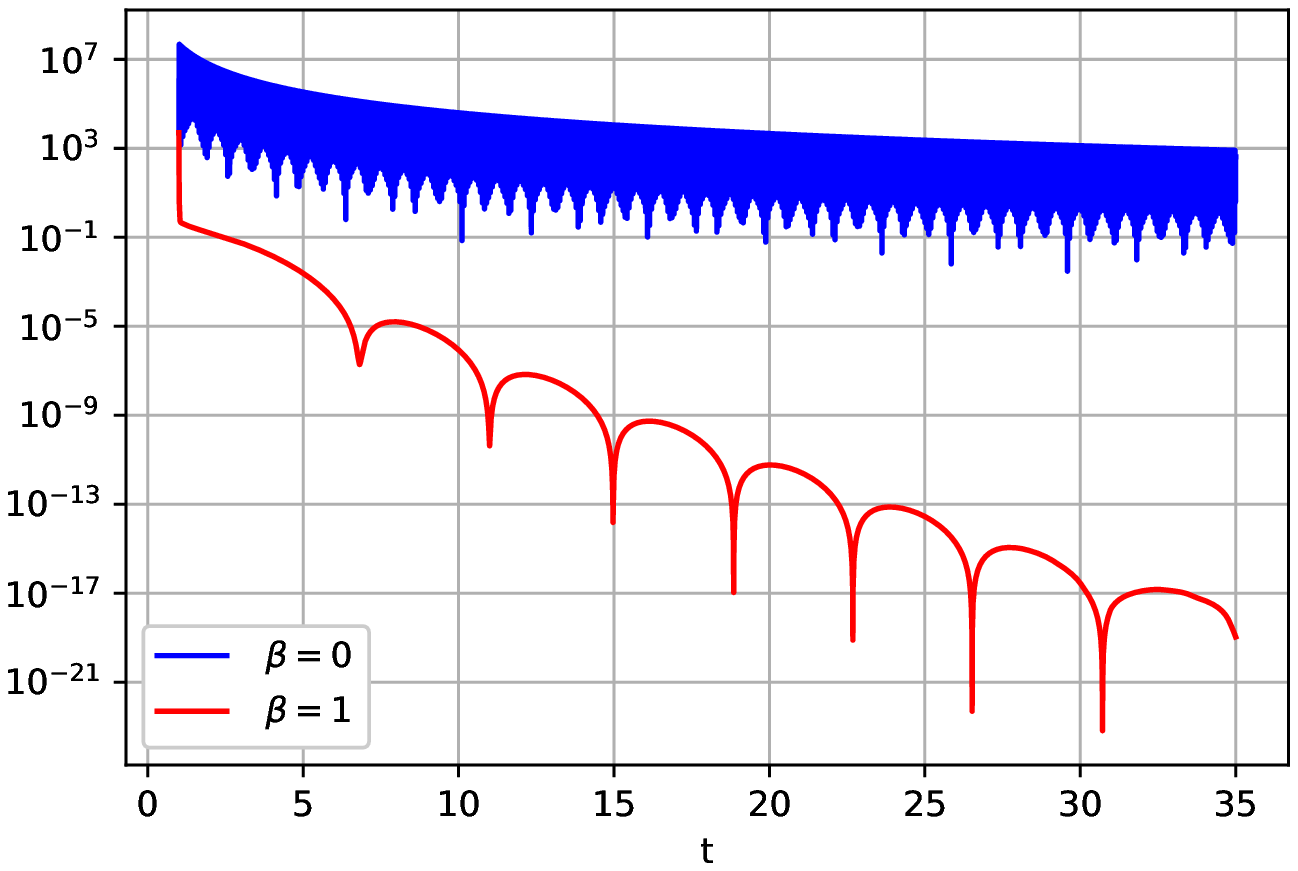}
\caption{Depiction of the function values according to Example \ref{EG:AVD_GIN_R3}, on the interval $[1,35]$, for $\alpha=3.1$, $\beta=1$, and $\rho=10$ (left) and $\rho=100$ (right).}
\label{fig:AVD_GIN}
\end{figure}

An alternative way to avoid$-$or at least moderate$-$the oscillations exemplified in Figure \ref{fig:AVD} for the solutions of \eqref{eq:avd_intr} is to stop the evolution and restart it with zero initial velocity, from time to time. The simplest option is to do so periodically, at fixed intervals. This idea is used in \cite{nesterov2013gradient} for the accelerated gradient method, where the number of iterations between restarts that depends on the parameter of strong convexity of the function. See also \cite{necoara2019linear,alamo2019restart,aujol:hal-03153525}, where the problem of estimating the appropriate restart times is addressed. An adaptive policy for the restarting of Nesterov's Method was proposed by O'Donoghue and Candès in \cite{o2015adaptive}, where the algorithm is restarted at the first iteration $k$ such that $\phi(x_{k+1})>\phi(x_{k})$, which prevents the function values to increase locally. This kind of restarting criteria shows a remarkable performance, although convergence rate guarantees have not been established, although some partial steps in this direction have been made in    \cite{giselsson2014monotonicity,lin_adaptive_2015}. Moreover, the authors of \cite{o2015adaptive} observe that this heuristic displays an erratic behavior when the difference $\phi(x_{k})-\phi(x_{k+1})$ is small, due to the prevalence of cancellation errors. Therefore, this method must be handled with care if high accuracy is desired. A different restarting scheme, based on
the speed of the trajectories, is proposed for \eqref{eq:avd_intr} in \cite{JMLR:v17:15-084}, where rates of convergence are established. The improvent can be remarkable, as shown in Figure \ref{fig:cont0}.

\begin{figure}[h]
\centering
\includegraphics[width=0.4\textwidth]{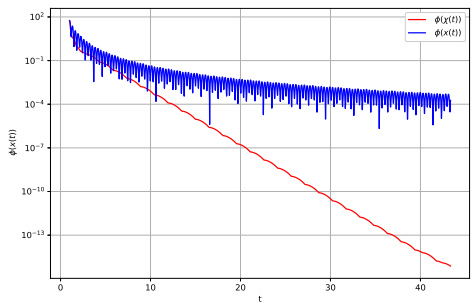}
\caption{Values along the trajectory, with (red) and without (blue) restarting, for \eqref{eq:avd_intr}. }
\label{fig:cont0}
\end{figure}

In \cite{JMLR:v17:15-084}, the authors also perform numerical tests using Nesterov's inertial gradient method, with this restarting scheme as a heuristic, and observe a faster convergence to the optimal value. \\

The aim of this work is to analyze the impact that the speed restarting scheme has on the solutions of \eqref{eq:din_avd}, in order to set the theoretical foundations to further accelerate Hessian driven inertial algorithms$-$like the ones in \cite{attouch2020first}$-$by means of a restarting policy. We provide linear convergence rates for functions with quadratic growth, and observe a noticeable improvement in the behavior of the trajectories in terms of stability and convergence speed, both in comparison with the non-restarted trajectories, and with the restarted solutions of \eqref{eq:avd_intr}. As a byproduct, we generalize improve some of the results in \cite{JMLR:v17:15-084}. \\

The paper is organized as follows: In section \ref{sec:restart_main}, we describe the speed restart scheme and state the convergence rate of the corresponding trajectories, which is the main theoretical result of this paper. Section \ref{sec:technicalities} contains the technical auxiliary results$-$especially some estimations on the restarting time$-$leading to the proof of our main result, which is carried out in Section \ref{sec:convergence_rate}. Finally, we present a few simple numerical examples in Section \ref{sec:illustration}, in order to illustrate the improvements, in terms of convergence speed, of the restarted trajectories.

	\section{Restarted trajectories for \eqref{eq:din_avd}}\label{sec:restart_main}

Throughout this paper, $\phi:\R^n\to \R$ is a twice continuously differentiable convex function, which attains its minimum value $\phi^*$, and whose gradient $\nabla \phi$ is Lipschitz-continuous with constant $L>0$. Consider the ordinary differential equation \eqref{eq:din_avd}, with initial conditions $x(0)=x_0$, $\dot{x}(0)=0$, and parameters $\alpha > 0$ and $\beta\ge 0$. A {\it solution} is a function in $\mathcal{C}^2\left( (0,+\infty); \R^n \right) \cap \mathcal{C}^1\left( [0,+\infty); \R^n \right)$, such that $x(0)=x_0$, $\dot{x}(0)=0$ and \eqref{eq:din_avd} holds for every $t>0$. Existence and uniqueness of such a solution is not straightforward due to the singularity at $t=0$, but can be established by a limiting procedure. As shown in Appendix \ref{Appendix}, we have the following:


\begin{theorem}\label{theo:existence}
	For every $x_0 \in \R^n$, the differential equation \eqref{eq:din_avd}, with initial conditions $x(0)=x_0$ and $\dot{x}(0)=0$, has a unique solution. 
\end{theorem}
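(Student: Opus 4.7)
The plan is to handle the singularity at $t=0$ by recasting \eqref{eq:din_avd} as a fixed-point equation via the integrating factor $t^{\alpha}$. Exploiting $\nabla^{2}\phi(x(t))\dot{x}(t)=\tfrac{d}{dt}\nabla\phi(x(t))$ and multiplying \eqref{eq:din_avd} by $t^{\alpha}$ rewrites the equation as $(t^{\alpha}\dot{x}(t))' + t^{\alpha}\nabla\phi(x(t)) + \beta t^{\alpha}\tfrac{d}{dt}\nabla\phi(x(t)) = 0$. Integrating from $0$ to $t$, using $\dot{x}(0)=0$, and integrating by parts on the Hessian term to eliminate $\dot x$ from the right-hand side, I obtain
$$
\dot{x}(t) + \beta\nabla\phi(x(t)) + \frac{1}{t^{\alpha}}\int_{0}^{t}s^{\alpha}\nabla\phi(x(s))\,ds - \frac{\alpha\beta}{t^{\alpha}}\int_{0}^{t}s^{\alpha-1}\nabla\phi(x(s))\,ds = 0.
$$
A further integration, anchored at $x(0)=x_{0}$, yields a Volterra-type equation $x=\mathcal{T}(x)$ on $\mathcal{C}([0,T];\R^{n})$.

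Next I would show that $\mathcal{T}$ is a strict contraction for $T=T_{0}$ small enough. The $L$-Lipschitz continuity of $\nabla\phi$ controls $\|\nabla\phi(x)-\nabla\phi(y)\|$ pointwise, and the apparently singular kernels collapse to $\tau^{-\alpha}\int_{0}^{\tau}s^{\alpha}\,ds=\tau/(\alpha+1)$ and $\tau^{-\alpha}\int_{0}^{\tau}s^{\alpha-1}\,ds=1/\alpha$, both bounded on $[0,T_{0}]$. A direct estimate then gives $\|\mathcal{T}(x)-\mathcal{T}(y)\|_{\infty}\le C(L,\alpha,\beta,T_{0})\,\|x-y\|_{\infty}$ with $C\to 0$ as $T_{0}\to 0$, and Banach's theorem produces a unique fixed point $x\in\mathcal{C}([0,T_{0}];\R^{n})$ with $x(0)=x_{0}$. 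Differentiating $\mathcal{T}(x)$ and using $\phi\in\mathcal{C}^{2}$ shows $x\in\mathcal{C}^{1}([0,T_{0}])$ with $\dot{x}(0)=0$, and $x\in\mathcal{C}^{2}((0,T_{0}])$ satisfying \eqref{eq:din_avd} pointwise.

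For global extension, a standard energy argument---taking the inner product of \eqref{eq:din_avd} with $\dot{x}$ and using $\|\nabla^{2}\phi\|\le L$ together with convexity of $\phi$---yields an a priori bound on $\|\dot x(t)\|$ and $\phi(x(t))-\phi^{\ast}$ on every finite interval, precluding finite-time blow-up, so the local solution extends to $[0,\infty)$. Uniqueness on $(0,\infty)$ follows from Grönwall's inequality applied to the difference of two solutions on $[\varepsilon,T]$, where the first-order system associated with \eqref{eq:din_avd} has a right-hand side Lipschitz in $(x,\dot x)$ with constant depending on $L$, $\alpha/\varepsilon$, and $\beta$; uniqueness at $t=0$ is inherited from the uniqueness of the fixed point of $\mathcal{T}$.

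The main obstacle is precisely the singular coefficient $\alpha/t$: naive Picard--Lindelöf on the first-order system fails because the right-hand side is not continuous at $t=0$. The role of the integrating factor $t^{\alpha}$, combined with the integration-by-parts trick for the Hessian term, is exactly to absorb this singularity and reduce the IVP to one where contraction-mapping theory applies cleanly, while the compatibility condition $\dot{x}(0)=0$ (rather than an arbitrary velocity) is what kills the boundary term after the first integration and makes $\mathcal{T}$ well defined.
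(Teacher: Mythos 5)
Your approach is correct in its essentials but genuinely different from the paper's, and it contains one subtle gap worth flagging.

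\medskip

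\textbf{Where you differ.} For existence, the paper does not set up a fixed-point equation at all. It proves a general principle (Theorem~\ref{theo:general2}) for singular systems $\ddot x+\gamma(t)\dot x+\mathcal F(x)\dot x+\mathcal G(x)=0$: solve the regularized problem on $[\delta,T]$ for each $\delta>0$ with the prescribed state and velocity at $t=\delta$, establish a $\delta$-uniform bound on $\gamma(t)\|\dot x_\delta(t)-v_0\|$ (which for \eqref{eq:din_avd} is done by mimicking Lemma~\ref{lem:lemma1}), conclude boundedness of $(x_\delta)$ in $H^1(0,T;\R^n)$, and extract a uniformly convergent subsequence whose limit is shown to satisfy the equation via an integrated form multiplied by $\Gamma(t)=\exp\!\left(\int_{t_0}^t\gamma\right)$. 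This route has the advantage of applying to a family of singular ODEs in one stroke and of decoupling the singular-time analysis from the specific structure of the nonlinearity, but it is indirect: existence comes from compactness and one needs a separate uniqueness argument. Your Volterra reformulation, by contrast, handles existence and local uniqueness simultaneously through Banach's theorem and makes the absorption of the singularity completely explicit through the two collapsing kernels $\tau^{-\alpha}\int_0^\tau s^{\alpha}\,ds=\tau/(\alpha+1)$ and $\tau^{-\alpha}\int_0^\tau s^{\alpha-1}\,ds=1/\alpha$. The identity $\nabla^2\phi(x(t))\dot x(t)=\tfrac{d}{dt}\nabla\phi(x(t))$ followed by integration by parts to eliminate $\dot x$ from the right-hand side is, amusingly, the exact device the paper itself uses, but only in its \emph{uniqueness} proof. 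You have promoted it to the engine of the whole argument. Your global extension via the energy $\tfrac12\|\dot x\|^2+\phi(x)-\phi^*$, which is nonincreasing because both $\tfrac{\alpha}{t}\|\dot x\|^2\ge 0$ and $\beta\langle\nabla^2\phi(x)\dot x,\dot x\rangle\ge 0$, is correct and standard, and the paper does not even spell it out.

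\medskip

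\textbf{The gap.} In the uniqueness step away from $t=0$ you write that the first-order system has a right-hand side ``Lipschitz in $(x,\dot x)$ with constant depending on $L$, $\alpha/\varepsilon$, and $\beta$.'' This is not warranted under the standing hypotheses. The velocity field contains $\beta\nabla^2\phi(x)v$, whose Lipschitz dependence on $x$ requires $\nabla^2\phi$ to be (locally) Lipschitz; but the paper only assumes $\phi\in\mathcal C^2$ with $\nabla\phi$ Lipschitz, which gives $\|\nabla^2\phi\|\le L$ but says nothing about the modulus of continuity of $\nabla^2\phi$ itself. So a naive Picard--Lindelöf/Grönwall argument on the first-order system does not go through. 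The fix is exactly the trick you already use for existence: replace $\nabla^2\phi(x(u))\dot x(u)-\nabla^2\phi(y(u))\dot y(u)$ by $\tfrac{d}{du}\bigl(\nabla\phi(x(u))-\nabla\phi(y(u))\bigr)$, integrate by parts, and bound everything by $L\,\sup_{u\le t}\|\dot x(u)-\dot y(u)\|$; this is what the paper's uniqueness lemma does, and it yields $Q(t)\,\tilde M(t)\le 0$ with $Q(t)>0$ near $0$, hence coincidence. Equivalently, observe that your contraction estimate for $\mathcal T$ on $[\varepsilon,\varepsilon+T_0]$ holds with a $T_0>0$ independent of $\varepsilon$ (the singular kernels are only better-behaved for $\varepsilon>0$), so uniqueness propagates by iterating the fixed-point argument on overlapping windows. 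Either patch closes the gap; as written, the Grönwall step is not justified.
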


We are concerned with the design and analysis of a restart scheme to accelerate the convergence of the solutions of \eqref{eq:din_avd} to minimizers of $\phi$, based on the method proposed in \cite{JMLR:v17:15-084}.  \\

\subsection{A speed restarting scheme and the main theoretical result}

Since the damping coefficient $\alpha/t$ goes to $0$ as $t\to\infty$, large values of $t$ result in a smaller stabilization of the trajectory. The idea is thus to restart the dynamics at the point where the speed ceases to increase. \\


Given $z\in\R^n$, let $y_z$ be the solution of \eqref{eq:din_avd}, with initial conditions $y_z(0)=z$ and $\dot{y}_z(0)=0$. Set
\begin{equation} \label{E:restart_time}
	T(z)=\inf\left\lbrace t >0\ :\ \dfrac{d}{dt} \norm{\dot{y}_z(t)}^2 \le 0  \right\rbrace.
\end{equation}

\begin{remark} \label{R:decreasing}
	Take $z\notin\argmin(\phi)$, and define $y_z$ as above. For $t\in\big(0,T(z)\big)$, we have
	$$
	\dfrac{d}{dt}\phi(y_{z}(t))=\inner{\nabla \phi (y_{z}(t))}{\dot{y}_{z}(t)} = - \inner{\ddot{y}_{z}(t)}{\dot{y}_{z}(t)} - \dfrac{\alpha}{t}\norm{\dot{y}_{z}(t)}^2 - \beta\inner{\nabla^2 \phi(y_{z}(t))\dot{y}_{z}(t)}{\dot{y}_{z}(t)}.
	$$
	But $\inner{\nabla^2 \phi(y_{z}(t))\dot{y}_{z}(t)}{\dot{y}_{z}(t)}\geq 0$ by convexity, and $\inner{\ddot{y}_{z}(t)}{\dot{y}_{z}(t)}\ge 0$ by the definition of $T(z)$. Therefore,
	\begin{equation}\label{eq:phi_derivative_bound}
		\dfrac{d}{dt}\phi(y_{z}(t))\leq  - \dfrac{\alpha}{t}\norm{\dot{y}_{z}(t)}^2.    
	\end{equation}
	In particular, $t\mapsto\phi\big(y_z(t)\big)$ decreases on $[0,T(z)]$.
\end{remark}

If $z\notin\argmin(\phi)$, then $T(z)$ cannot be $0$. In fact, we shall prove (see Corollaries \ref{C:inf} and \ref{C:sup}) that 
\begin{equation} \label{E:T(z)}
	0<\inf\big\{T(z):z\notin\argmin(\phi)\big\}\le \sup\big\{T(z):z\notin\argmin(\phi)\big\}<\infty.
\end{equation}

\begin{definition} \label{def:restarted}
	Given $x_0\in \R^n$, the {\it restarted trajectory} $\chi_{x_0}:[0,\infty)\to\R^n$ is defined inductively:
	\begin{enumerate}
		\item First, compute $y_{x_0}$, $T_1=T(x_0)$ and $S_1=T_1$, and define $\chi_{x_0}(t)=y_{x_0}(t)$ for $t\in[0,S_1]$.
		\item For $i\ge 1$, having defined $\chi_{x_0}(t)$ for $t\in[0,S_i]$, set $x_i=\chi_{x_0}(S_i)$, and compute $y_{x_i}$. Then, set $T_{i+1}=T(x_i)$ and $S_{i+1}=S_{i}+T_{i+1}$, and define $\chi_{x_0}(t)=y_{x_i}(t-S_{i})$ for $t\in(S_i,S_{i+1}]$.
	\end{enumerate}
\end{definition}

In view of \eqref{E:T(z)}, $S_i$ is defined for all $i\ge 1$, $\inf_{i\ge 1}(S_{i+1}-S_i)>0$ and $\lim_{i\to\infty}S_i=\infty$. Moreover, in view of Remark \ref{R:decreasing}, we have

\begin{proposition} \label{P:nonincreasing}
	The function $t\mapsto\phi\big(\chi_{x_0}(t)\big)$ is nonincreasing on $[0,\infty)$.
\end{proposition}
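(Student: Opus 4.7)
The plan is to reduce the global statement to two ingredients: monotonicity on each of the restart epochs $[S_i, S_{i+1}]$, which is inherited from Remark \ref{R:decreasing}, and continuity at the junction times $S_i$, which is built into Definition \ref{def:restarted}. First, I would record the trivial case in which some $x_i \in \argmin(\phi)$: since $\phi \circ \chi_{x_0}$ has already attained its minimum and will continue to satisfy $\phi(\chi_{x_0}(t)) \ge \phi^*$, the claim follows, so one may assume $x_i \notin \argmin(\phi)$ for all $i \ge 0$ and therefore, by \eqref{E:T(z)}, each $T_{i+1} > 0$.

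Next, I would fix $i \ge 0$ and show that $\phi \circ \chi_{x_0}$ is nonincreasing on $[S_i, S_{i+1}]$. By construction, $\chi_{x_0}(t) = y_{x_i}(t - S_i)$ on this interval, so by the chain rule,
\begin{equation*}
\frac{d}{dt}\phi\bigl(\chi_{x_0}(t)\bigr) = \frac{d}{ds}\phi\bigl(y_{x_i}(s)\bigr)\bigg|_{s = t - S_i}.
\end{equation*}
For $s \in (0, T_{i+1}) = (0, T(x_i))$, Remark \ref{R:decreasing} applied to $z = x_i$ (using \eqref{eq:phi_derivative_bound}) yields
\begin{equation*}
\frac{d}{ds}\phi\bigl(y_{x_i}(s)\bigr) \le -\frac{\alpha}{s}\,\bigl\|\dot{y}_{x_i}(s)\bigr\|^2 \le 0,
\end{equation*}
so that $\phi \circ \chi_{x_0}$ has nonpositive derivative on $(S_i, S_{i+1})$. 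Combined with continuity of $\chi_{x_0}$ on the closed interval (since $y_{x_i} \in \mathcal{C}^1([0,\infty);\R^n)$), this gives the monotonicity on $[S_i, S_{i+1}]$.

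Finally, I would glue these epochs together. By Definition \ref{def:restarted} we have $\chi_{x_0}(S_i^+) = y_{x_i}(0) = x_i = \chi_{x_0}(S_i)$ for every $i \ge 1$, so $\chi_{x_0}$ is continuous at each restart time, and hence $\phi \circ \chi_{x_0}$ is continuous on $[0, \infty)$. Given any $0 \le t_1 < t_2$, choose indices $i \le j$ with $t_1 \in [S_i, S_{i+1}]$ and $t_2 \in [S_j, S_{j+1}]$ (with the convention $S_0 = 0$); then the telescoping chain
\begin{equation*}
\phi\bigl(\chi_{x_0}(t_1)\bigr) \ge \phi\bigl(\chi_{x_0}(S_{i+1})\bigr) \ge \cdots \ge \phi\bigl(\chi_{x_0}(S_j)\bigr) \ge \phi\bigl(\chi_{x_0}(t_2)\bigr),
\end{equation*}
where each inequality is the previous epoch-wise monotonicity applied together with continuity at the junctions, concludes the argument. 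The proof is essentially bookkeeping; the only mild subtlety is handling the possibility $x_i \in \argmin(\phi)$, which is why the case is dispatched at the outset.
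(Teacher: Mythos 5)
Your proof follows exactly the route the paper implicitly intends: the paper offers no written proof of this proposition, merely the phrase ``in view of Remark \ref{R:decreasing}'', which is precisely the epoch-wise monotonicity plus continuity-at-junctions argument you spell out. Your main argument is correct and matches the paper's.

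One small point of imprecision worth flagging: your dispatch of the corner case $x_i \in \argmin(\phi)$ is not quite a valid argument as written. From ``$\phi(\chi_{x_0}(S_i)) = \phi^*$ and $\phi(\chi_{x_0}(t)) \ge \phi^*$ for $t > S_i$'' one \emph{cannot} conclude nonincreasing --- that inequality points in the wrong direction; nonincreasing would require $\phi(\chi_{x_0}(t)) \le \phi^*$ afterwards. What actually closes this case is that $\nabla\phi(x_i)=0$ forces the constant function $y_{x_i}\equiv x_i$ to solve \eqref{eq:din_avd} with zero initial velocity, and by the uniqueness in Theorem \ref{theo:existence} the trajectory stays put, so $\phi\circ\chi_{x_0}$ is constant at $\phi^*$ thereafter. (Strictly speaking, in this situation $T(x_i)=0$ and Definition \ref{def:restarted} degenerates, but the natural extension is $\chi_{x_0}\equiv x_i$ for $t\ge S_i$; the paper sidesteps this by simply assuming $x_i\notin\argmin(\phi)$, as it states in the proof of Theorem \ref{teo:convergence_rate}.) This is a minor defect in a corner case the paper itself glosses over; the body of your proof is sound.
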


Our main theoretical result establishes that  $\phi\big(\chi_{x_0}(t)\big)$ converges linearly to $\phi^*$, provided there exists $\mu>0$ such that \begin{equation} \label{E:Loja}
	\mu(\phi(z) - \phi^*) \leq \dfrac{1}{2}\norm{\nabla \phi(z)}^2
\end{equation} 
for all $z\in\R^n$. The \L ojasiewicz inequality \eqref{E:Loja} is equivalent to quadratic growth  and is implied by strong convexity (see \cite{bolte2017error}). More precisely, we have the following:



\begin{theorem} \label{teo:convergence_rate} Let $\phi:\R^n\to \R$ be convex and twice continuously differentiable. Assume $\nabla \phi$ is Lipschitz-continuous with constant $L>0$, there exists $\mu>0$ such that \eqref{E:Loja} holds, and that the minimum value $\phi^*$ of $\phi$ is attained. Given $\alpha\geq 3$ and $\beta>0$, let be the restarted trajectory defined by \eqref{eq:din_avd} from an initial point $x_0 \in \R^n$. Then, there exist constants $C,K>0$ such that
	$$\phi\big(\chi_{x_0}(t)\big)-\phi^*\le Ce^{-Kt}\big(\phi(x_0)-\phi^*\big)\le \frac{CL}{2}e^{-Kt}\dist\big(x_0,\argmin(\phi)\big)^2$$
	for all $t>0$.
\end{theorem}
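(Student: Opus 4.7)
The plan is to reduce the continuous-time linear convergence statement to a \emph{per-cycle} geometric decrease of the function gap, and then convert iteration counts into elapsed time using the uniform two-sided bounds on the restart intervals granted by \eqref{E:T(z)}.

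First, I would handle the very last inequality $\phi(x_0)-\phi^*\le \tfrac{L}{2}\dist(x_0,\argmin(\phi))^2$ separately, since it is an immediate consequence of the descent lemma for $L$-smooth functions: applying $\phi(x_0)\le \phi(x^*)+\inner{\nabla\phi(x^*)}{x_0-x^*}+\tfrac{L}{2}\|x_0-x^*\|^2$ at the projection $x^*$ of $x_0$ onto $\argmin(\phi)$ (so $\nabla\phi(x^*)=0$) yields the bound.

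The core of the argument is establishing a constant $\delta\in(0,1)$, independent of $z$, such that for every $z\notin\argmin(\phi)$,
\begin{equation*}
\phi\bigl(y_z(T(z))\bigr)-\phi^* \le (1-\delta)\bigl(\phi(z)-\phi^*\bigr).
\end{equation*}
To get this, I would integrate the differential inequality \eqref{eq:phi_derivative_bound} from Remark~\ref{R:decreasing} over $[0,T(z)]$, producing
\begin{equation*}
\phi\bigl(y_z(T(z))\bigr)-\phi(z)\le -\alpha\int_0^{T(z)}\frac{1}{t}\norm{\dot{y}_z(t)}^2\,dt.
\end{equation*}
Then I need a lower bound on the right-hand integral in terms of $\phi(z)-\phi^*$. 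The natural route is: (i) use the fact that $t\mapsto\|\dot{y}_z(t)\|^2$ is nondecreasing on $[0,T(z)]$ by the definition of $T(z)$, combined with the uniform lower bound $T(z)\ge T_{\min}>0$ from Corollary~\ref{C:inf} (and the uniform upper bound $T_{\max}$ from Corollary~\ref{C:sup}); (ii) use the ODE near $t=0$, where $\dot{y}_z(0)=0$ forces the velocity to build up essentially as a multiple of $\nabla\phi(z)\,t$, to control $\|\dot y_z(t)\|^2$ from below by a multiple of $\|\nabla\phi(z)\|^2$ on some interval; and finally (iii) invoke the \L ojasiewicz inequality \eqref{E:Loja} to replace $\|\nabla\phi(z)\|^2$ by $2\mu(\phi(z)-\phi^*)$. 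The uniform bounds on $T(z)$ ensure that all these estimates can be made independent of $z$, yielding the desired $\delta$. This uniform lower bound on the integral is the main obstacle: everything else is routine, but coupling the dynamics near $t=0$ to the \L ojasiewicz inequality in a way that is independent of how far $z$ is from $\argmin(\phi)$ requires careful quantitative estimates that use both the Lipschitz constant $L$, $\mu$, $\alpha$, $\beta$, and the extremal restart times from Section~\ref{sec:technicalities}.

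Once the per-cycle contraction is in hand, iteration gives $\phi(x_i)-\phi^*\le(1-\delta)^i(\phi(x_0)-\phi^*)$ for the restart points $x_i=\chi_{x_0}(S_i)$. Since $T_{\min}\le S_{i+1}-S_i\le T_{\max}$, any $t\ge 0$ lies in some $(S_i,S_{i+1}]$ with $i\ge (t-T_{\max})/T_{\max}$, and Proposition~\ref{P:nonincreasing} gives $\phi(\chi_{x_0}(t))\le \phi(x_i)$. Setting $K=-T_{\max}^{-1}\ln(1-\delta)>0$ and absorbing the shift into a constant $C$ yields
\begin{equation*}
\phi\bigl(\chi_{x_0}(t)\bigr)-\phi^*\le C e^{-Kt}\bigl(\phi(x_0)-\phi^*\bigr),
\end{equation*}
which combined with the descent-lemma inequality above completes the proof.
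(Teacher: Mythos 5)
Your proposal follows essentially the same route as the paper: reduce to a per-cycle geometric contraction of $\phi-\phi^*$ obtained by integrating the dissipation inequality \eqref{eq:phi_derivative_bound} and coupling a lower bound on the speed near $t=0$ with the \L ojasiewicz inequality \eqref{E:Loja}, then convert cycle counts to elapsed time via the two-sided restart-time bounds (Corollaries~\ref{C:inf} and~\ref{C:sup}) and Proposition~\ref{P:nonincreasing}, with the descent lemma supplying the final inequality. The quantitative step you flag as ``the main obstacle'' is precisely what the paper supplies through Lemmas~\ref{L:I_and_J} and~\ref{lem:lemma1}, Corollary~\ref{C:I_and_J}, and the speed lower bound \eqref{E:speed}, packaged as Proposition~\ref{P:reduction}; note that the paper integrates over a fixed deterministic interval $[0,\tau]$ with $\tau\in(0,\tau_2)\cap(0,T(z)]$ (then extends by monotonicity of $\phi$ along the trajectory) rather than over all of $[0,T(z)]$ as you sketch, which avoids having to lower-bound $\ln\big(T(z)/\tau_0\big)$ and makes the resulting contraction factor explicit.
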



The rather technical proof is split into several parts and presented in the next subsections.

\section{Technicalities} \label{sec:technicalities}

Throughout this section, we fix $z\notin\argmin(\phi)$ and, in order to simplify the notation, we denote by $x$ (instead of $y_{z}$) the solution of \eqref{eq:din_avd} with initial condition $x(0)=z$ and $\dot x(0)=0$.

\subsection{A few useful artifacts}

We begin by defining some useful auxiliary functions and point out the main relationships between them. \\

To this end, we first rewrite equation \eqref{eq:din_avd} as
\begin{equation}\label{eq:derivative_form}
	\dfrac{d}{dt}(t^\alpha\dot{x}(t))  = - t^\alpha\nabla \phi(x(t))  - \beta t^\alpha\nabla^2 \phi(x(t))\dot{x}(t).  \end{equation}

Integrating \eqref{eq:derivative_form} over $[0,t]$, we get 
\begin{align} \label{eq:eqIJ} 
	t^\alpha\dot{x}(t) & = - \int_{0}^{t}u^\alpha \nabla \phi(x(u))\, du -\beta   \int_{0}^{t}u^\alpha\nabla^2 \phi(x(u))\dot{x}(u)\, du \nonumber \\
	& = -\left[\int_{0}^{t} u^\alpha(\nabla \phi(x(u))- \nabla \phi(z))\, du\right] - \left[\beta   \int_{0}^{t}u^\alpha\nabla^2 \phi(x(u))\dot{x}(u)\, du\right] - \dfrac{t^{\alpha+1}}{\alpha+1}\nabla \phi(z). 
\end{align}
In order to obtain an upper bound for the speed $\dot x$, the integrals 
\begin{equation} \label{E:I_and_J}
	I_z(t)=\int_{0}^{t} u^\alpha(\nabla \phi(x(u))- \nabla \phi(z))\, du \qquad\hbox{and}\qquad J_z(t)=\beta   \int_{0}^{t}u^\alpha\nabla^2 \phi(x(u))\dot{x}(u)\, du
\end{equation}
will be majorized using the function 
\begin{equation} \label{E:M}
	M_z(t)=\sup_{u \in (0,t]} \left[ \dfrac{\norm{\dot{x}(u)}}{u} \right],
\end{equation}
which is positive, nondecreasing and continuous.

\begin{lemma} \label{L:I_and_J}
	For every $t>0$, we have
	$$\norm{I_z(t)}  \leq \dfrac{LM_z(t)t^{\alpha+3}}{2(\alpha+3)} \qquad\hbox{and}\qquad
	\norm{J_z(t)}  \le \dfrac{\beta L M_z(t)t^{\alpha+2}}{\alpha+2}.$$ 
\end{lemma}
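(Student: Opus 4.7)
The plan is to bound each quantity separately by invoking (i) the $L$-Lipschitz continuity of $\nabla\phi$, which in particular gives $\|\nabla^2\phi(y)\|\le L$ for every $y\in\R^n$, and (ii) the inequality $\|\dot x(u)\|\le u\,M_z(u)\le u\,M_z(t)$ for every $u\in(0,t]$, which is immediate from the definition of $M_z$ and its monotonicity.

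For $I_z(t)$, I first want to control $\|x(u)-z\|$. Since $x(0)=z$, writing $x(u)-z=\int_0^u\dot x(s)\,ds$ and using $\|\dot x(s)\|\le s\,M_z(s)\le s\,M_z(u)$ yields
\[
\|x(u)-z\|\;\le\;\int_0^u s\,M_z(s)\,ds\;\le\;M_z(u)\int_0^u s\,ds\;=\;\frac{u^2}{2}\,M_z(u)\;\le\;\frac{u^2}{2}\,M_z(t).
\]
Combining this with the $L$-Lipschitz estimate $\|\nabla\phi(x(u))-\nabla\phi(z)\|\le L\|x(u)-z\|$ and integrating against $u^\alpha$ gives
\[
\|I_z(t)\|\;\le\;\int_0^t u^\alpha\cdot L\cdot\frac{u^2}{2}\,M_z(t)\,du\;=\;\frac{L\,M_z(t)}{2}\cdot\frac{t^{\alpha+3}}{\alpha+3},
\]
which is the first bound.

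For $J_z(t)$, the argument is shorter: the Hessian operator norm is controlled by $L$, and $\|\dot x(u)\|\le u\,M_z(t)$, so
\[
\|J_z(t)\|\;\le\;\beta\int_0^t u^\alpha\cdot L\cdot u\,M_z(t)\,du\;=\;\beta L\,M_z(t)\cdot\frac{t^{\alpha+2}}{\alpha+2},
\]
which is the second bound.

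No serious obstacle is expected. The only mildly delicate point is pulling $M_z$ out of the inner integral in the estimate for $I_z$, which requires exploiting the monotonicity of $M_z$; everything else is a direct application of the triangle inequality for Bochner integrals together with the Lipschitz property of $\nabla\phi$.
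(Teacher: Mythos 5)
Your proof is correct and for $I_z$ it is identical to the paper's. For $J_z$ you take a marginally shorter route: you invoke the operator-norm bound $\|\nabla^2\phi\|\le L$ (a standard consequence of the $L$-Lipschitz gradient) and apply it directly to $\dot x(u)$, whereas the paper reaches the same intermediate estimate $\|\nabla^2\phi(x(u))\dot x(u)\|\le Lu\,M_z(u)$ by writing $\nabla^2\phi(x(u))\dot x(u)$ as the limit of difference quotients $\bigl(\nabla\phi(x(r))-\nabla\phi(x(u))\bigr)/(r-u)$ and bounding those via Lipschitz continuity of $\nabla\phi$ together with $M_z$. Both arguments yield the same bound and are essentially equivalent; yours is a bit more direct at the cost of explicitly invoking the Hessian norm bound.
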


\begin{proof}
	For the first estimation, we use the Lipschitz-continuity of $\nabla \phi$ and the fact that $M$ in nondecreasing, to obtain
	$$\norm{\nabla \phi(x(u)) - \nabla \phi(z)} \leq L\|x(u)-z\| \leq L \norm{\int_{0}^{u} \dot{x}(s)\, ds} \leq L \int_{0}^{u} s \dfrac{\norm{\dot{x}(s)}}{s}\, ds \leq LM_z(u) \int_{0}^{u} s \, ds ,$$
	which results in 
	\begin{equation} \label{E:bound_gradients}
		\norm{\nabla \phi(x(u)) - \nabla \phi(z)} \leq \dfrac{Lu^2M_z(u)}{2}
	\end{equation}
	Then, from the definition of $I_z(t)$ we deduce that
	$$\norm{I_z(t)} \leq \int_{0}^{t} u^\alpha \norm{\nabla\phi(x(u)) - \nabla\phi(z)} \, du
	\leq \dfrac{LM_z(t)}{2} \int_{0}^{t} u^{\alpha+2} \, du = \dfrac{LM_z(t)t^{\alpha+3}}{2(\alpha+3)}.$$
	For the second inequality, we proceed analogously to get  
	$$ \norm{\nabla^2 \phi(x(u))\dot x(u)} 
	= \norm{{\lim_{r \to u}} \dfrac{\nabla \phi(x(r)) - \nabla \phi(x(u))}{r-u}} 
	\leq {\lim_{r \to u}} \dfrac{L}{r-u} \int_{u}^{r}\norm{\dot{x}(s)}\, ds 
	\leq {\lim_{r \to u}} \dfrac{L M_z(r)}{r-u} \int_{u}^{r} s\, ds,$$
	which yields
	\begin{equation} \label{E:bound_Hessian}
		\norm{\nabla^2 \phi(x(u))\dot x(u)} \le  LuM_z(u).
	\end{equation}
	Then,    
	$$\norm{J_z(t)} \leq \beta   \int_{0}^{t}u^\alpha \norm{\nabla^2\phi(x(u))\dot{x}(u)}\, du \leq \beta \int_{0}^{t} u^{\alpha+1} LM_z(u) \,du \leq \dfrac{\beta L M_z(t)t^{\alpha+2}}{\alpha+2},$$
	as claimed.
\end{proof}

The dependence of $M_z$ on the initial condition $z$ may be greatly simplified. To this end, set
\begin{equation} \label{E:H}
	H(t)=1 -\dfrac{L\beta t}{(\alpha+2)} - \dfrac{Lt^2}{2(\alpha+3)}.
\end{equation}
The function $H$ is concave, quadratic, does not depend on $z$, and has exactly one positive zero, given by
\begin{equation} \label{E:tau1}
	\tau_1=- \left(\dfrac{ \alpha+3}{\alpha+2}\right)\beta+\sqrt{\left(\dfrac{ \alpha+3}{\alpha+2}\right)^2\beta^2 + \dfrac{2(\alpha+3)}{L}}.
\end{equation}
In particular, $H$ decreases strictly from $1$ to $0$ on $[0,\tau_1]$. 

\begin{lemma} \label{lem:lemma1}
	For every $t \in (0,\tau_1)$, 
	\begin{equation} \label{eq:bound_M}
		M_z(t) \leq \dfrac{\norm{\nabla \phi (z)}}{(\alpha+1)H(t)}.    
	\end{equation}
\end{lemma}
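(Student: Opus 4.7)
The plan is to combine the integral representation \eqref{eq:eqIJ} with the bounds from Lemma \ref{L:I_and_J}, and then solve the resulting inequality for $M_z(t)$.

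First, I would rewrite \eqref{eq:eqIJ} as
$$\dot{x}(t) = -\frac{1}{t^\alpha}\,I_z(t) - \frac{1}{t^\alpha}\,J_z(t) - \frac{t}{\alpha+1}\nabla\phi(z),$$
valid for any $t>0$. Taking norms, dividing through by $t$, and using Lemma \ref{L:I_and_J}, I obtain, for every $u \in (0,t]$,
$$\frac{\norm{\dot{x}(u)}}{u} \le \frac{L\,M_z(u)\,u^2}{2(\alpha+3)} + \frac{\beta L\,M_z(u)\,u}{\alpha+2} + \frac{\norm{\nabla\phi(z)}}{\alpha+1}.$$

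Next, I would exploit the fact that $M_z$ is nondecreasing: since $u\le t$ and the coefficients multiplying $M_z(u)$ are increasing in $u$, each term on the right-hand side is bounded by the corresponding expression with $u$ replaced by $t$ and $M_z(u)$ replaced by $M_z(t)$. Taking the supremum over $u\in(0,t]$ on the left therefore yields
$$M_z(t) \le \left[\frac{Lt^2}{2(\alpha+3)} + \frac{\beta L t}{\alpha+2}\right]M_z(t) + \frac{\norm{\nabla\phi(z)}}{\alpha+1},$$
which, after collecting the $M_z(t)$ terms, reads $M_z(t)\,H(t) \le \dfrac{\norm{\nabla\phi(z)}}{\alpha+1}$ with $H$ as in \eqref{E:H}.

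Finally, since $H$ is strictly positive on $(0,\tau_1)$ by the definition of $\tau_1$, I can divide by $H(t)$ to conclude. The only potentially delicate point is that $M_z(t)$ is a priori finite (so that the rearrangement is legitimate); this follows because $\dot{x}$ is continuous and $\dot{x}(0)=0$, making $u\mapsto \norm{\dot{x}(u)}/u$ continuous and bounded on $(0,t]$ for each $t>0$. Thus the argument is essentially a one-line algebraic manipulation once the integral bounds of Lemma \ref{L:I_and_J} are in place; there is no serious obstacle.
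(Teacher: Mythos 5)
Your argument is correct and follows the paper's proof essentially verbatim: divide \eqref{eq:eqIJ} by $t^{\alpha+1}$, bound $I_z$ and $J_z$ via Lemma \ref{L:I_and_J}, take the supremum over $u\in(0,t]$ to produce the self-referential inequality in $M_z(t)$, rearrange into $H(t)M_z(t)\le \norm{\nabla\phi(z)}/(\alpha+1)$, and divide by $H(t)>0$ on $(0,\tau_1)$. The only addition is your remark on the a priori finiteness of $M_z(t)$, which the paper instead absorbs into the assertion following \eqref{E:M} that $M_z$ is finite, nondecreasing and continuous.
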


\begin{proof} 
	If $0< u\le t$, using \eqref{eq:eqIJ} and \eqref{E:I_and_J}, along with Lemma \ref{L:I_and_J}, we obtain
	\begin{equation} \label{eq:bound_dotx}
		\dfrac{\norm{\dot{x}(u)}}{u} \leq \dfrac{\norm{I_z(u)+J_z(u)}}{u^{\alpha+1}}  + \dfrac{\norm{\nabla \phi(z)}}{\alpha+1}
		\leq \left[\dfrac{L u^2}{2(\alpha+3)} +  \dfrac{L\beta u}{\alpha+2} \right] M_z(u) + \dfrac{\norm{\nabla \phi(z)}}{\alpha+1}.
	\end{equation}
	Since the right-hand side is nondecreasing in $t$, we take the supremum for $u\in[0,t]$ to deduce that
	$$M_z(t) \leq \left[\dfrac{L t^2}{2(\alpha+3)} +  \dfrac{L\beta t}{\alpha+2} \right] M_z(t) + \dfrac{\norm{\nabla \phi(z)}}{\alpha+1}.$$
	Rearranging the terms, and using the definition of $H$, given in \eqref{E:H}, we see that
	$$H(t)M_z(t) \leq  \dfrac{\norm{\nabla \phi (z)}}{(\alpha+1)}.$$
	We conclude by observing that $H$ is positive on $(0,\tau_1)$. 
\end{proof}

By combining Lemmas \ref{L:I_and_J} and \ref{lem:lemma1}, and inequalities \eqref{E:bound_gradients} and \eqref{E:bound_Hessian}, we obtain:

\begin{corollary} \label{C:I_and_J}
	For every $t \in (0,\tau_1)$, we have
	\begin{align*}
		\norm{I_z(t)+J_z(t)} 
		\if{
			\le 
			t^{\alpha+1}\,\left[\dfrac{L t^2}{2(\alpha+3)} +  \dfrac{L\beta t}{\alpha+2} \right]\dfrac{\norm{\nabla \phi (z)}}{(\alpha+1)H(t)} =
		}\fi 
		& \le  t^{\alpha+1}\,\left[\dfrac{1-H(t)}{H(t)} \right]\dfrac{\norm{\nabla \phi (z)}}{(\alpha+1)} \\
		\left\|\big(\nabla \phi (x(t)) - \nabla \phi (z)\big) + \beta \nabla^2\phi(x(t))\dot{x}(t)\right\| & \le \left[\dfrac{Lt^2}{2}+\beta Lt\right] \dfrac{\norm{\nabla \phi (z)}}{(\alpha+1)H(t)}.
	\end{align*}
\end{corollary}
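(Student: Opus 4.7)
The plan is to treat this corollary as a repackaging of the estimates already established, with essentially no new analytical content. For the first inequality, I would apply the triangle inequality to the definitions in \eqref{E:I_and_J} and invoke Lemma \ref{L:I_and_J} directly, which yields
$$\norm{I_z(t)+J_z(t)} \;\le\; \frac{L M_z(t) t^{\alpha+3}}{2(\alpha+3)} + \frac{\beta L M_z(t) t^{\alpha+2}}{\alpha+2} \;=\; t^{\alpha+1} M_z(t)\left[\frac{Lt^2}{2(\alpha+3)}+\frac{L\beta t}{\alpha+2}\right].$$
The next move is to recognize, straight from the definition \eqref{E:H} of $H$, that the bracket equals $1-H(t)$. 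Since $t\in(0,\tau_1)$ ensures $H(t)>0$, I can then substitute the bound $M_z(t)\le \norm{\nabla\phi(z)}/[(\alpha+1)H(t)]$ from Lemma \ref{lem:lemma1} and arrive at the claimed bound $t^{\alpha+1}\frac{1-H(t)}{H(t)}\frac{\norm{\nabla\phi(z)}}{\alpha+1}$.

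For the second inequality, I would combine the pointwise estimates \eqref{E:bound_gradients} and \eqref{E:bound_Hessian} derived inside the proof of Lemma \ref{L:I_and_J}, evaluated at $u=t$, via the triangle inequality:
$$\left\|\big(\nabla\phi(x(t))-\nabla\phi(z)\big)+\beta\nabla^2\phi(x(t))\dot x(t)\right\| \;\le\; \frac{Lt^2}{2}M_z(t) + \beta L t\, M_z(t) \;=\; \left[\frac{Lt^2}{2}+\beta L t\right]M_z(t).$$
Substituting the same bound on $M_z(t)$ from Lemma \ref{lem:lemma1} gives the stated conclusion.

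I do not anticipate any genuine obstacle: everything needed is already in place, and the argument is really just algebraic bookkeeping. The only two small points to watch are (i) ensuring that we remain on the interval $(0,\tau_1)$ so that Lemma \ref{lem:lemma1} applies and $H(t)>0$, and (ii) correctly identifying the bracket $\tfrac{Lt^2}{2(\alpha+3)}+\tfrac{L\beta t}{\alpha+2}$ with $1-H(t)$ in the first inequality, so that the final expression matches the form stated in the corollary.
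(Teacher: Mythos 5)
Your proposal is correct and mirrors exactly the route the paper indicates (combining Lemma \ref{L:I_and_J} and Lemma \ref{lem:lemma1} for the first bound, and inequalities \eqref{E:bound_gradients} and \eqref{E:bound_Hessian} together with Lemma \ref{lem:lemma1} for the second). In particular, your identification of the bracket $\tfrac{Lt^2}{2(\alpha+3)}+\tfrac{L\beta t}{\alpha+2}$ with $1-H(t)$ and your use of $H(t)>0$ on $(0,\tau_1)$ are precisely the points the paper relies on, so there is nothing to add.
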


We highlight the fact that the bound above depends on $z$ only via the factor $\norm{\nabla \phi (z)}$.

\subsection{Estimates for the restarting time}

We begin by finding a lower bound for the restarting time, depending on the parameters $\alpha$, $\beta$ and $L$, but not on the initial condition $z$.

\begin{lemma} \label{lem:technical}
	Let $z\notin\argmin(\phi)$, and let $x$ be the solution of \eqref{eq:din_avd} with initial conditions $x(0)=z$ and $\dot x(0)=0$. For every $t\in(0,\tau_1)$, we have
	$$
	\inner{\dot{x}(t)}{\ddot{x}(t)}\ge 
	\frac{t\,\|\nabla \phi(z)\|^2}{(\alpha+1)^2H(t)^2}\left(1-\dfrac{(2\alpha+3)\beta Lt}{(\alpha+2)}-\dfrac{(\alpha+2)Lt^2}{(\alpha+3)}\right).
	$$
\end{lemma}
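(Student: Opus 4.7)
The plan is to express $\dot x(t)$ explicitly via \eqref{eq:eqIJ} and then compute $\langle\dot x,\ddot x\rangle$ so that the leading $\|\nabla\phi(z)\|^2$ contribution emerges before any inequalities are applied. Concretely, from \eqref{eq:eqIJ} I would write
$$\dot{x}(t) = -\dfrac{t}{\alpha+1}\nabla\phi(z) - \dfrac{I_z(t)+J_z(t)}{t^\alpha},$$
and, from \eqref{eq:din_avd},
$$\langle\dot{x}(t),\ddot{x}(t)\rangle = -\dfrac{\alpha}{t}\|\dot{x}(t)\|^2 - \langle\dot{x}(t),\nabla\phi(x(t))\rangle - \beta\langle\dot{x}(t),\nabla^2\phi(x(t))\dot{x}(t)\rangle.$$
I would then split $\nabla\phi(x(t)) = \nabla\phi(z) + (\nabla\phi(x(t))-\nabla\phi(z))$ and expand both $\|\dot x(t)\|^2$ and $\langle\dot x(t),\nabla\phi(z)\rangle$ using the explicit formula for $\dot x$.

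The crucial bookkeeping step is to combine \emph{before} taking absolute values. The $\|\nabla\phi(z)\|^2$ contributions, namely $\frac{t}{\alpha+1}\|\nabla\phi(z)\|^2$ from $-\langle\dot x,\nabla\phi(z)\rangle$ and $-\frac{\alpha t}{(\alpha+1)^2}\|\nabla\phi(z)\|^2$ from $-\frac{\alpha}{t}\|\dot x\|^2$, merge into $\frac{t}{(\alpha+1)^2}\|\nabla\phi(z)\|^2$. The $\langle\nabla\phi(z),I_z+J_z\rangle$ contributions combine with the coefficient $\frac{1-\alpha}{\alpha+1}$, and what remains is $-\frac{\alpha\|I_z+J_z\|^2}{t^{2\alpha+1}}$ together with $-\langle\dot x,(\nabla\phi(x(t))-\nabla\phi(z))+\beta\nabla^2\phi(x(t))\dot x\rangle$. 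Bounding $-\frac{\alpha}{t}\|\dot x\|^2$ directly through Lemma \ref{lem:lemma1} instead of via this decomposition yields a strictly weaker estimate, so this reorganization is where the argument has to be done carefully.

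I would then apply Cauchy--Schwarz to the two remaining inner products, invoking Corollary \ref{C:I_and_J} to estimate $\|I_z(t)+J_z(t)\|/t^\alpha$ by $\frac{t(1-H(t))\|\nabla\phi(z)\|}{(\alpha+1)H(t)}$ and the combined gradient-plus-Hessian term by $\bigl[\frac{Lt^2}{2}+\beta Lt\bigr]\frac{\|\nabla\phi(z)\|}{(\alpha+1)H(t)}$, together with Lemma \ref{lem:lemma1} to bound $\|\dot x(t)\|\le\frac{t\|\nabla\phi(z)\|}{(\alpha+1)H(t)}$. Factoring the common $\frac{t\|\nabla\phi(z)\|^2}{(\alpha+1)^2H(t)^2}$, the bracket reduces to
$$H(t)^2 - (\alpha-1)H(t)(1-H(t)) - \alpha(1-H(t))^2 - \beta Lt - \tfrac{Lt^2}{2}.$$

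The final step is the algebraic simplification. Writing $a=1-H(t)$, the three $H$-terms give $1-(\alpha+1)a$ plus a coefficient of $a^2$ equal to $1+(\alpha-1)-\alpha=0$, so the quadratic cancels exactly and the bracket collapses to $1-(\alpha+1)a-\beta Lt-\frac{Lt^2}{2}$. Substituting $a=\frac{L\beta t}{\alpha+2}+\frac{Lt^2}{2(\alpha+3)}$ and collecting powers of $t$ yields precisely $1-\frac{(2\alpha+3)\beta Lt}{\alpha+2}-\frac{(\alpha+2)Lt^2}{\alpha+3}$, as claimed. The main obstacle is the careful combination of the leading order terms in step~2; the subsequent cancellation of the $a^2$ contribution is the mechanism that yields the clean coefficients in the stated inequality.
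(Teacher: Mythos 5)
Your proposal is correct and follows essentially the same route as the paper: the paper writes $\ddot x = A(t) - B(t)$ with $A(t) = \frac{\alpha}{t^{\alpha+1}}(I_z+J_z) - \frac{1}{\alpha+1}\nabla\phi(z)$ and expands $\langle\dot x, A\rangle$ directly, which is algebraically identical to your expansion of $-\frac{\alpha}{t}\|\dot x\|^2 - \langle\dot x, \nabla\phi(z)\rangle$, and both then apply Cauchy--Schwarz with Corollary~\ref{C:I_and_J} and simplify the bracket to $(\alpha+1)H - \alpha - \frac{Lt^2}{2} - \beta Lt$. Your observation that the leading-order $\|\nabla\phi(z)\|^2$ contributions must be combined before estimating is exactly the organizing idea of the paper's proof.
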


\begin{proof}
	From \eqref{eq:eqIJ} and \eqref{E:I_and_J}, we know that 
	\begin{equation}\label{eq:dotx}
		\dot{x}(t)= -\dfrac{1}{t^\alpha}\big(I_z(t) +J_z(t) \big) -  \dfrac{t}{\alpha+1}\nabla \phi (z).   
	\end{equation}
	On the other hand,
	$$\dfrac{d}{dt}\left[ \dfrac{1}{t^\alpha}\big(I_z(t) +J_z(t) \big) \right] = -\dfrac{\alpha}{t^{\alpha+1}}\big(I_z(t) +J_z(t) \big) +\left(\nabla \phi \big(x(t)\big) - \nabla \phi (z)\right) + \beta\nabla^2 \phi(x(t))\dot{x}(t).$$
	Then, 
	$$\ddot{x}(t)=  \dfrac{\alpha}{t^{\alpha+1}} \big(I_z(t)+J_z(t)\big)  -(\nabla \phi (x(t)) - \nabla \phi (z)) - \beta \nabla^2\phi(x(t))\dot{x}(t) - \dfrac{1}{\alpha+1}\nabla\phi(z)= A(t)-B(t),$$
	where 
	$$A(t)=\dfrac{\alpha}{t^{\alpha+1}} \big(I_z(t)+J_z(t)\big)- \dfrac{1}{\alpha+1}\nabla\phi(z) \qquad\hbox{and}\qquad B(t)=\big(\nabla \phi (x(t)) - \nabla \phi (z)\big) + \beta \nabla^2\phi(x(t))\dot{x}(t) $$
	With this notation, we have
	$$
	\inner{\dot x(t)}{\ddot x(t)} = \inner{\dot x(t)}{A(t)}-\inner{\dot x(t)}{B(t)}\ge \inner{\dot x(t)}{A(t)}-\|\dot x(t)\|\,\|B(t)\|.$$
	For the first term, we do as follows:
	\begin{align*}
		\inner{\dot x(t)}{A(t)} & = -\left\langle \dfrac{1}{t^\alpha}\big(I_z(t) +J_z(t) \big) + \dfrac{t}{\alpha+1}\nabla \phi (z), \dfrac{\alpha}{t^{\alpha+1}} \big(I_z(t)+J_z(t)\big)- \dfrac{1}{\alpha+1}\nabla\phi(z) \right\rangle \\ 
		& \ge \frac{t}{(\alpha+1)^2}\|\nabla \phi(z)\|^2 -\dfrac{\alpha}{t^{2\alpha+1}}\|I_z(t)+J_z(t)\|^2 -\dfrac{(\alpha-1)}{t^{\alpha}(\alpha+1)}\|\nabla \phi(z)\|\,\|I_z(t)+J_z(t)\| \\
		& \ge \frac{t}{(\alpha+1)^2}\|\nabla \phi(z)\|^2 -\dfrac{\alpha t}{(\alpha+1)^2}\left[\dfrac{1-H(t)}{H(t)} \right]^2\norm{\nabla \phi (z)}^2 -\dfrac{(\alpha-1)t}{(\alpha+1)^2}\left[\dfrac{1-H(t)}{H(t)} \right]\|\nabla \phi(z)\|^2 \\
		& = \frac{t\,\|\nabla \phi(z)\|^2}{(\alpha+1)^2}\left(1-\alpha\left[\dfrac{1-H(t)}{H(t)} \right]^2 - (\alpha-1)\left[\dfrac{1-H(t)}{H(t)} \right]\right)\\
		& = \frac{t\,\|\nabla \phi(z)\|^2}{(\alpha+1)^2H(t)^2}\left(H(t)^2-\alpha\big(1-H(t)\big)^2 - (\alpha-1)H(t)\big(1-H(t)\big)\right)\\
		& = \frac{t\,\|\nabla \phi(z)\|^2}{(\alpha+1)^2H(t)^2}\big((\alpha+1)H(t)-\alpha\big),
	\end{align*}
	where we have used the Cauchy-Schwarz inequality and Corollary \ref{C:I_and_J}. For the second term, we first use \eqref{eq:dotx} and observe that 
	$$\|\dot x(t)\|\le \dfrac{1}{t^\alpha}\|I_z(t)+J_z(t)\|+\dfrac{t}{(\alpha+1)}\|\nabla \phi(z)\|\le \dfrac{t\,\|\nabla \phi(z)\|}{(\alpha+1)H(t)},$$
	and
	$$B(t)\le \left[\dfrac{Lt^2}{2}+\beta Lt\right] \dfrac{\|\nabla \phi (z)\|}{(\alpha+1)H(t)},$$
	by Corollary \ref{C:I_and_J}. We conclude that
	\begin{align*}
		\inner{\dot x(t)}{\ddot x(t)} 
		& \ge \frac{t\,\|\nabla \phi(z)\|^2}{(\alpha+1)^2H(t)^2}\left((\alpha+1)H(t)-\alpha-\dfrac{Lt^2}{2}-\beta Lt\right) \\
		& = \frac{t\,\|\nabla \phi(z)\|^2}{(\alpha+1)^2H(t)^2}\left(1-\dfrac{(2\alpha+3)\beta Lt}{(\alpha+2)}-\dfrac{(\alpha+2)Lt^2}{(\alpha+3)}\right),
	\end{align*}
	as stated.
\end{proof}

The function $G$, defined by 
\begin{equation} \label{E:technical}
	G(t)=1-\dfrac{(2\alpha+3)\beta Lt}{(\alpha+2)}-\dfrac{(\alpha+2)Lt^2}{(\alpha+3)}=(\alpha+1)H(t)-\alpha-\dfrac{Lt^2}{2}-\beta Lt,
\end{equation}
does not depend on the initial condition $z$. Its unique positive zero is 
\begin{equation}\label{eq:tau_3}
	\tau_3=- \dfrac{(\alpha+3)(2\alpha+3)}{2(\alpha+2)^2}\beta+\sqrt{\dfrac{(\alpha+3)^2(2\alpha+3)^2}{4(\alpha+2)^4}\beta^2 + \dfrac{(\alpha+3)}{(\alpha+2)L}}.    
\end{equation}

In view of the definition of the restarting time, an immediate consequence of Lemma \ref{lem:technical} is

\begin{corollary} \label{C:inf}
	Let $T_*=\inf\big\{T(z):z\notin\argmin(\phi)\big\}$. Then, $\tau_3\le T_*$.
\end{corollary}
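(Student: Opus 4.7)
The plan is to read off the lower bound directly from Lemma \ref{lem:technical}. By definition, $T(z)=\inf\{t>0:\tfrac{d}{dt}\|\dot{y}_z(t)\|^2\le 0\}$, and since $\tfrac{d}{dt}\|\dot{y}_z(t)\|^2 = 2\langle \dot{y}_z(t),\ddot{y}_z(t)\rangle$, it suffices to show that $\langle \dot{y}_z(t),\ddot{y}_z(t)\rangle >0$ for every $t\in(0,\tau_3)$ and every $z\notin\argmin(\phi)$. This immediately gives $T(z)\ge\tau_3$ for all such $z$, and hence $T_*\ge\tau_3$ upon taking the infimum.

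Before invoking Lemma \ref{lem:technical}, I would verify the compatibility condition $\tau_3\le\tau_1$, which ensures that the estimate of the lemma is valid throughout $(0,\tau_3)$. This is a routine coefficient comparison between the two concave quadratics: both $G$ and $H$ equal $1$ at the origin, and the inequalities $\tfrac{2\alpha+3}{\alpha+2}\ge 1$ and $\tfrac{\alpha+2}{\alpha+3}\ge\tfrac{1}{2(\alpha+3)}$ give $G(t)\le H(t)$ for every $t\ge 0$, so the first positive root of $G$ cannot exceed that of $H$.

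With $\tau_3\le\tau_1$ in hand, Lemma \ref{lem:technical} yields
$$\langle \dot{y}_z(t),\ddot{y}_z(t)\rangle \;\ge\; \frac{t\,\|\nabla\phi(z)\|^2}{(\alpha+1)^2\,H(t)^2}\, G(t)$$
throughout $(0,\tau_3)$. The factor $\|\nabla\phi(z)\|$ is nonzero because $\phi$ is convex and $z\notin\argmin(\phi)$; the factor $G(t)$ is strictly positive on $(0,\tau_3)$ by definition of $\tau_3$ as the smallest positive root of the concave quadratic $G$ with $G(0)=1$; and $H(t)^2>0$ on the same interval since $\tau_3\le\tau_1$. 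Therefore the right-hand side is strictly positive on $(0,\tau_3)$, which forces $T(z)\ge\tau_3$, as required.

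I do not anticipate a genuine obstacle: the analytical heavy lifting has already been done in Lemma \ref{lem:technical}, and what remains is essentially a sign inspection. If any point warrants care, it will be isolating the check $\tau_3\le\tau_1$, so as to be certain that the regime of validity of Lemma \ref{lem:technical} truly covers the interval $(0,\tau_3)$ rather than some smaller set on which the argument would otherwise be incomplete.
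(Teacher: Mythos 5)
Your proof is correct and matches the paper's intended argument: the paper states Corollary \ref{C:inf} as an immediate consequence of Lemma \ref{lem:technical} and establishes $\tau_1>\tau_2>\tau_3>0$ shortly afterward in \eqref{E:taus}, whereas you verify $\tau_3\le\tau_1$ directly by comparing the quadratics $G$ and $H$, which is sound. (One small slip: the linear-term comparison you actually need is $\tfrac{2\alpha+3}{\alpha+2}\ge\tfrac{1}{\alpha+2}$, i.e.\ $2\alpha+3\ge 1$, rather than $\tfrac{2\alpha+3}{\alpha+2}\ge 1$; both hold for $\alpha>0$, so the conclusion $G\le H$ and hence $\tau_3\le\tau_1$ is unaffected.)
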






\begin{remark} \label{R:tau_3} If $\beta=0$, then
	$$\tau_3=\sqrt{\dfrac{(\alpha+3)}{(\alpha+2)L}}.$$
	The case $\alpha=3$ and $\beta=0$ was studied in \cite{JMLR:v17:15-084}, and the authors provided $\frac{4}{5\sqrt{L}}$ as a lower bound for the restart. The arguments presented here yield a higher bound, since
	$$\tau_3=\sqrt{\dfrac{6}{5L}} > \dfrac{1}{\sqrt{L}} >
	\frac{4}{5\sqrt{L}}.$$
\end{remark}


Recall that the function $H$ given in \eqref{E:H} decreases from $1$ to $0$ on $[0,\tau_1]$. Therefore, $H(t)>\frac{1}{2}$ for all $t\in [0,\tau_2)$, where
\begin{equation} \label{E:tau2}
	\tau_2=H^{-1}\big(\hbox{$\frac{1}{2}$}\big)=-\left(\dfrac{ \alpha+3}{\alpha+2}\right)\beta + \sqrt{\left( \dfrac{\alpha+3}{\alpha+2} \right)^2 \beta^2 + \dfrac{\alpha+3}{L}}<\tau_1. 
\end{equation}
Evaluating the right-hand side of \eqref{E:technical}, we see that
$$G(\tau_2)=\frac{(1-\alpha)-L\tau_2^2-2\beta L\tau_2}{2}<0,$$
whence
\begin{equation} \label{E:taus}
	\tau_1>\tau_2>\tau_3>0.
\end{equation}
These facts will be useful to provide an upper bound for the restarting time.

\begin{proposition}\label{P:sup} 
	Let $z\notin\argmin(\phi)$, and let $x$ be the solution of \eqref{eq:din_avd} with initial conditions $x(0)=z$ and $\dot x(0)=0$. Let $\phi$ satisfy \eqref{E:Loja} with $\mu>0$. For each $\tau\in(0,\tau_2)\cap(0,T(z)]$, we have
	$$T(z) \leq \tau \exp \left[ \dfrac{(\alpha+1)^2}{2\alpha\mu\tau^2\Psi(\tau)} \right], \qquad\hbox{where}\qquad
	\Psi(\tau)=\left[2-\frac{1}{H(\tau)}\right]^2.$$
\end{proposition}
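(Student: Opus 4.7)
The plan is to couple three ingredients: the fact that the speed is nondecreasing on $(0,T(z)]$ (by definition of $T(z)$), the energy-dissipation estimate from Remark~\ref{R:decreasing}, and the \L ojasiewicz inequality \eqref{E:Loja} at the initial point $z$. The logarithmic factor in the exponential bound strongly hints that integration of $\frac{\alpha}{t}\|\dot x(\tau)\|^2$ over $[\tau,T(z)]$ is the engine, and the quantity $\Psi(\tau)=[2-1/H(\tau)]^2$ is exactly what one gets as a squared lower bound on $\|\dot x(\tau)\|/\|\nabla\phi(z)\|$ after subtracting the ``error'' $(1-H(\tau))/H(\tau)$ from the leading term $\tau/(\alpha+1)$ in the formula \eqref{eq:dotx} for $\dot x$.

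First, I would pick any $\tau\in(0,\tau_2)\cap(0,T(z)]$ and note two things on the interval $[\tau,T(z)]$: (i) by the definition \eqref{E:restart_time} of the restarting time, $\|\dot x(\cdot)\|$ is nondecreasing there, so $\|\dot x(t)\|\ge\|\dot x(\tau)\|$; (ii) by Remark~\ref{R:decreasing},
\[
\frac{d}{dt}\phi\bigl(x(t)\bigr)\le -\frac{\alpha}{t}\|\dot x(t)\|^2\le -\frac{\alpha}{t}\|\dot x(\tau)\|^2.
\]
Integrating from $\tau$ to $T(z)$ and using that $\phi$ is nonincreasing along the trajectory (so $\phi(x(\tau))\le\phi(z)$) together with $\phi(x(T(z)))\ge \phi^*$ yields
\[
\alpha\,\|\dot x(\tau)\|^2\,\ln\!\left(\frac{T(z)}{\tau}\right)\le \phi(z)-\phi^*\le \frac{1}{2\mu}\|\nabla\phi(z)\|^2,
\]
where the last step is \eqref{E:Loja}.

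Next, I would produce the lower bound on $\|\dot x(\tau)\|$. Using \eqref{eq:dotx}, the reverse triangle inequality, and the estimate on $\|I_z(\tau)+J_z(\tau)\|$ from Corollary~\ref{C:I_and_J},
\[
\|\dot x(\tau)\|\ge \frac{\tau}{\alpha+1}\|\nabla\phi(z)\|-\frac{1}{\tau^{\alpha}}\|I_z(\tau)+J_z(\tau)\|\ge \frac{\tau\|\nabla\phi(z)\|}{\alpha+1}\left(1-\frac{1-H(\tau)}{H(\tau)}\right)=\frac{\tau\|\nabla\phi(z)\|}{\alpha+1}\left(2-\frac{1}{H(\tau)}\right).
\]
Since $\tau<\tau_2$ and $H$ is decreasing with $H(\tau_2)=\tfrac12$, the factor $2-1/H(\tau)$ is strictly positive, so squaring gives
\[
\|\dot x(\tau)\|^2\ge \frac{\tau^2\|\nabla\phi(z)\|^2}{(\alpha+1)^2}\,\Psi(\tau).
\]

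Finally, inserting this lower bound into the integrated inequality, the factor $\|\nabla\phi(z)\|^2$ cancels, leaving
\[
\ln\!\left(\frac{T(z)}{\tau}\right)\le\frac{(\alpha+1)^2}{2\alpha\mu\tau^2\Psi(\tau)},
\]
from which the claimed exponential bound on $T(z)$ follows by exponentiating. The only genuinely nontrivial step is verifying that $2-1/H(\tau)>0$ on the admissible range of $\tau$ and that the subtraction in the reverse triangle inequality is valid; both are handled by the hypothesis $\tau<\tau_2$ together with \eqref{E:taus}. Everything else is bookkeeping on results already available in the text.
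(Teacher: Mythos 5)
Your proof is correct and follows essentially the same route as the paper: establish the lower bound $\|\dot x(\tau)\|\ge\frac{\tau}{\alpha+1}[2-1/H(\tau)]\|\nabla\phi(z)\|$ via Corollary~\ref{C:I_and_J} and the reverse triangle inequality, then integrate $\frac{d}{dt}\phi(x(t))\le -\frac{\alpha}{t}\|\dot x(\tau)\|^2$ over $[\tau,T(z)]$ to get the logarithmic bound and close with \eqref{E:Loja}. The only difference is cosmetic (you present the integration first and the speed bound second, the paper does the reverse), so there is nothing further to compare.
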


\begin{proof}
	In view of \eqref{eq:eqIJ} and \eqref{E:I_and_J}, we can use Corollary \ref{C:I_and_J} to obtain
	$$\norm{\dot{x}(\tau) + \frac{\tau}{\alpha+1}\nabla \phi(z)} = \dfrac{1}{\tau^\alpha} \norm{ I(\tau) + J(\tau)  } \le  \tau\,\left[\dfrac{1}{H(\tau)}-1 \right]\dfrac{\norm{\nabla \phi (z)}}{(\alpha+1)}.$$
	From the (reverse) triangle inequality and the definition of $H$, it ensues that
	\begin{equation} \label{E:speed}
		\norm{\dot{x}(\tau)} \ge \dfrac{\tau\norm{\nabla \phi(z)}}{\alpha+1}-\tau\,\left[\dfrac{1}{H(\tau)}-1 \right]\dfrac{\norm{\nabla \phi (z)}}{(\alpha+1)} =\tau\left[2-\frac{1}{H(\tau)}\right]\dfrac{\norm{\nabla \phi(z)}}{\alpha+1},
	\end{equation}
	which is positive, because $\tau\in(0,\tau_2)$. Now, take $t \in [\tau,T(z)]$. Since $\norm{\dot{x}(t)}^2$ increases on $\big[0,T(z)\big]$, Remark \ref{R:decreasing} gives
	$$\dfrac{d}{dt}\phi\big(x(t)\big) \leq -\dfrac{\alpha}{t}\norm{\dot{x}(t)}^2 \leq -\dfrac{\alpha}{t}\norm{\dot{x}(\tau)}^2 \le -\dfrac{1}{t}\left[\dfrac{\alpha\tau^2\Psi(\tau)\norm{\nabla \phi(z)}^2}{(\alpha+1)^2}\right].  $$
	Integrating over $[\tau,T(z)]$, we get
	\begin{equation} \label{E:phi_log}
		\phi\big(x(T(z))\big) - \phi\big(x(\tau)\big) \leq -\left[\dfrac{\alpha\tau^2\Psi(\tau)\norm{\nabla \phi(z)}^2}{(\alpha+1)^2}\right]\ln\left[ \dfrac{T(z)}{\tau} \right].
	\end{equation}
	It follows that
	$$\left[\dfrac{\alpha\tau^2\Psi(\tau)\norm{\nabla \phi(z)}^2}{(\alpha+1)^2}\right]\ln\left[ \dfrac{T(z)}{\tau} \right]\le \phi\big(x(\tau)\big)- \phi\big(x(T(z))\big) \le \phi(z)-\phi^* \le \dfrac{\norm{\nabla \phi(z)}^2}{2\mu},$$
	in view of \eqref{E:Loja}. It suffices to rearrange the terms to conclude. 
\end{proof}

\begin{corollary} \label{C:sup}
	Let $\phi$ satisfy \eqref{E:Loja} with $\mu>0$, and let $\tau_*\in(0,\tau_2)\cap(0,T_*]$. Then,
	$$\sup\big\{T(z):z\notin\argmin(\phi)\big\} \leq \tau_* \exp \left[ \dfrac{(\alpha+1)^2}{2\alpha\mu\tau_*^2\Psi(\tau_*)} \right].$$
\end{corollary}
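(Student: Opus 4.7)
The plan is to deduce Corollary \ref{C:sup} as an immediate consequence of Proposition \ref{P:sup}, exploiting that the bound given there depends on $z$ only through the choice of $\tau$, and the right-hand side is a constant once $\tau$ is fixed.

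First I would verify that the hypotheses of Proposition \ref{P:sup} are satisfied uniformly in $z$ when we take $\tau=\tau_*$. By the definition of $T_*$, we have $T_* \le T(z)$ for every $z\notin\argmin(\phi)$. Since $\tau_*\in (0,\tau_2)\cap(0,T_*]$ by assumption, it follows that $\tau_*\in (0,\tau_2)\cap(0,T(z)]$ for every such $z$. Therefore, for each admissible $z$, Proposition \ref{P:sup} applied with $\tau=\tau_*$ yields
\[
T(z) \;\le\; \tau_* \exp\!\left[\dfrac{(\alpha+1)^2}{2\alpha\mu\tau_*^2\Psi(\tau_*)}\right].
\]
The right-hand side is independent of $z$, so taking the supremum over all $z\notin\argmin(\phi)$ preserves the inequality, which gives exactly the claimed bound.

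There is essentially no technical obstacle here; the only thing to check is that the interval $(0,\tau_2)\cap(0,T_*]$ is nonempty, so that such a $\tau_*$ exists. This is guaranteed by Corollary \ref{C:inf}, which ensures $T_*\ge \tau_3>0$, together with \eqref{E:taus} giving $\tau_2>\tau_3$: one may, for instance, pick any $\tau_*\in(0,\tau_3]$, which automatically satisfies $\tau_*<\tau_2$ and $\tau_*\le T_*$. Hence the statement is nonvacuous and the proof reduces to the single line described above.
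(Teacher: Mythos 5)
Your proof is correct and matches the intended argument: since $\tau_*\le T_*\le T(z)$ for every $z\notin\argmin(\phi)$, Proposition \ref{P:sup} applies uniformly with $\tau=\tau_*$, and the $z$-independent right-hand side survives taking the supremum. The paper leaves this as an immediate consequence, and you have filled in exactly the expected details, including the nonvacuity check via Corollary \ref{C:inf} and \eqref{E:taus}.
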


\section{Function value decrease and proof of Theorem \ref{teo:convergence_rate}} \label{sec:convergence_rate}

The next result provides the ratio at which the function values have been reduced by the time the trajectory is restarted.

\begin{proposition} \label{P:reduction} 
	Let $z\notin\argmin(\phi)$, and let $x$ be the solution of \eqref{eq:din_avd} with initial conditions $x(0)=z$ and $\dot x(0)=0$. Let $\phi$ satisfy \eqref{E:Loja} with $\mu>0$. For each $\tau\in(0,\tau_2)\cap(0,T(z)]$, we have
	\begin{equation*}\label{eq:linear_convergence}
		\phi\big(x(t)\big) - \phi^* \leq \left[1-\dfrac{\alpha\mu\tau^2\Psi(\tau)}{(\alpha+1)^2}\right]\big(\phi(z) - \phi^*\big)
	\end{equation*}
	for every $t\in[\tau,T(z)]$.
\end{proposition}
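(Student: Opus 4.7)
My plan is to show the inequality first at $t = \tau$ and then extend to all $t \in [\tau, T(z)]$ using monotonicity. I begin by noting that equation \eqref{E:speed}, derived in the proof of Proposition \ref{P:sup}, really furnishes a pointwise lower bound on $\norm{\dot x(s)}$ valid throughout $(0, \tau_2)$: its derivation relies only on Corollary \ref{C:I_and_J} and the reverse triangle inequality, with no reference to the monotonicity of $\norm{\dot x}^2$ or to $T(z)$. Substituting a generic $s$ for $\tau$ in that bound gives
$$\norm{\dot x(s)}^2 \ge \frac{s^2 \Psi(s) \norm{\nabla \phi(z)}^2}{(\alpha+1)^2} \qquad \text{for every } s \in (0, \tau_2),$$
with $\Psi(s) = [2 - 1/H(s)]^2$ as before.

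The main step is then to combine this with Remark \ref{R:decreasing}, which gives $\frac{d}{ds} \phi(x(s)) \le -\frac{\alpha}{s} \norm{\dot x(s)}^2$ on $(0, T(z))$. Since $\tau \in (0, \tau_2) \cap (0, T(z)]$, both bounds apply on $(0, \tau]$, yielding
$$\frac{d}{ds} \phi(x(s)) \le -\frac{\alpha s \Psi(s) \norm{\nabla \phi(z)}^2}{(\alpha+1)^2}, \qquad s \in (0, \tau].$$
Because $H$ is strictly decreasing on $[0, \tau_1]$ and satisfies $H > 1/2$ on $[0, \tau_2)$, the function $\Psi$ is nonincreasing on $[0, \tau_2)$, so $\Psi(s) \ge \Psi(\tau)$ for every $s \in [0, \tau]$. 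Integrating from $0$ to $\tau$ and using $\int_0^\tau s \, ds = \tau^2/2$ then gives
$$\phi(z) - \phi(x(\tau)) \ge \frac{\alpha \tau^2 \Psi(\tau) \norm{\nabla \phi(z)}^2}{2(\alpha+1)^2}.$$

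To close the argument, I would apply the \L ojasiewicz inequality \eqref{E:Loja} in the form $\norm{\nabla \phi(z)}^2 \ge 2\mu(\phi(z) - \phi^*)$ and rearrange, obtaining
$$\phi(x(\tau)) - \phi^* \le \left[1 - \frac{\alpha \mu \tau^2 \Psi(\tau)}{(\alpha+1)^2}\right](\phi(z) - \phi^*),$$
which is the claim at $t = \tau$. The extension to arbitrary $t \in [\tau, T(z)]$ then follows at once from the monotonicity of $\phi \circ x$ on $[0, T(z)]$ recorded in Remark \ref{R:decreasing}. The one subtlety worth flagging is the recognition that \eqref{E:speed} is truly a pointwise bound on $(0, \tau_2)$; using it as a uniform estimate on $(0, \tau]$, together with the monotonicity of $\Psi$, is precisely what converts the logarithmic-type integration appearing in the proof of Proposition \ref{P:sup} (see \eqref{E:phi_log}) into the linear factor $\tau^2 \Psi(\tau)/2$ responsible for the linear rate.
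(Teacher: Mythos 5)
Your proof is correct and follows essentially the same route as the paper: apply the pointwise speed lower bound from \eqref{E:speed} together with Remark \ref{R:decreasing}, use the monotonicity of $\Psi$ on $[0,\tau_2)$ to replace $\Psi(s)$ by $\Psi(\tau)$, integrate on $(0,\tau)$, invoke \eqref{E:Loja}, and finish by the monotonicity of $\phi\circ x$ on $[0,T(z)]$. Your explicit remark that \eqref{E:speed} is genuinely pointwise (independent of $T(z)$) and that $\Psi$ is nonincreasing because $H$ is decreasing \emph{and} stays above $1/2$ on $[0,\tau_2)$ is a slightly more careful justification than the paper's, but the argument is the same.
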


\begin{proof} 
	Take $s\in(0,\tau)$. By combining Remark \ref{R:decreasing} with \eqref{E:speed}, we obtain
	$$\dfrac{d}{ds}\phi(x(s)) \le - \dfrac{\alpha }{s}\norm{\dot{x}(s)}^2 \le - \dfrac{\alpha s\norm{\nabla \phi(z)}^2}{(\alpha+1)^2}\left[2-\frac{1}{H(s)}\right]^2 \le - \dfrac{\alpha s\norm{\nabla \phi(z)}^2}{(\alpha+1)^2}\Psi(\tau)   
	$$ 
	because $H$ decreases on $(0,\tau_1)$, which contains $(0,\tau)$. Integrating on $(0,\tau)$ and using \eqref{E:Loja}, we obtain
	$$\phi\big(x(\tau)\big)-\phi^*\le \phi(z)-\phi^*-\dfrac{\alpha \tau^2 \Psi(\tau) \norm{\nabla \phi(z)}^2}{2(\alpha+1)^2}\le \left[1-\dfrac{\alpha\mu \tau^2\Psi(\tau)}{(\alpha+1)^2}\right]\big(\phi(z)-\phi^*\big).$$
	To conclude, it suffices to observe that $\phi\big(x(t)\big)\le\phi\big(x(\tau)\big)$ in view of Remark \ref{R:decreasing}.
\end{proof}

\begin{remark}\label{r:bound} 
	Since $\Psi$ is decreasing in $[0,\tau_2)$, we have $\Psi(t)\ge\Psi(\tau_*)>0$, whenever $0\le t\le \tau_* < \tau_2$. Moreover, in view of \eqref{E:taus} and Corollary \ref{C:inf}, we can take $\tau_*=\tau_3$ to obtain a lower bound. If $\beta=0$, we obtain
	$$\Psi(t)\ge\Psi(\tau_3)=\left[2-\frac{1}{H(\tau_3)}\right]^2=\left[2-\frac{1}{1-\frac{1}{2(\alpha+2)}}\right]^2=\left[2-\frac{2\alpha+4}{2\alpha+3}\right]^2=\left[\dfrac{2\alpha+2}{2\alpha+3}\right]^2,$$
	which is independent of $L$. As a consequence, the inequality in Proposition \ref{P:reduction} becomes
	$$
	\phi\big(x(t)\big)-\phi^*\le \left( 1 - \frac{4\alpha(\alpha+3)}{(\alpha+2)(2\alpha+3)^2}\dfrac{\mu}{L}\right)(\phi(x_0) - \phi^*).    
	$$
	For $\alpha=3$, this gives
	$$
	\phi\big(x(t)\big)-\phi^*\le \left( 1 - \frac{8}{45}\dfrac{\mu}{L}\right)(\phi(x_0) - \phi^*).    
	$$
	For this particular case, a similar result, obtained in \cite{JMLR:v17:15-084} for strongly convex functions, namely
	$$
	\phi\big(x(t)\big)-\phi^*\le \left( 1 - \frac{3}{25} \left( \frac{67}{71}\right)^2\dfrac{\mu}{L}\right)(\phi(x_0) - \phi^*).  
	$$
	Our constant is approximately 66.37\% larger than the one from \cite{JMLR:v17:15-084}, which implies a greater reduction in the function values each time the trajectory is restarted. On the other hand, if $\beta>0$, we can still obtain a slightly smaller lower bound, namely
	$\Psi(\tau_3)>\left(\dfrac{2\alpha+1}{2\alpha+2}\right)^2$, independent from $\beta$ and $L$. The proof is technical and will be omitted.
\end{remark}


\subsubsection*{Proof of Theorem \ref{teo:convergence_rate}}

Adopt the notation in Definition \ref{def:restarted}, take any $\tau_*\in(0,\tau_2)\cap(0,T_*]$, and set
$$\tau^*=\tau_* \exp \left[ \dfrac{(\alpha+1)^2}{2\alpha\mu\tau_*^2\Psi(\tau_*)} \right], \qquad\hbox{where}\qquad
\Psi(\tau_*)=\left[2-\frac{1}{H(\tau_*)}\right]^2.$$
In view of Corollaries \ref{C:inf} and \ref{C:sup}, we have
$$\tau_*\le T(x_i)\le \tau^*$$
for all $i\ge 0$ (we assume $x_i\notin\argmin(\phi)$ since the result is trivial otherwise). Given $t>0$, let $m$ be the largest positive integer such that $m\tau^*\le t$. By time $t$, the trajectory will have been restarted at least $m$ times. By Proposition \ref{P:nonincreasing}, we know that
$$\phi\big(\chi_{x_0}(t)\big)\le \phi\big(\chi_{x_0}(m\tau^*)\big) \le \phi\big(\chi_{x_0}(m\tau_*)\big).$$
We may not apply Proposition \ref{P:reduction} repeatedly to deduce that
$$\phi\big(\chi_{x_0}(t)\big)-\phi^* \le Q^{m}\big(\phi(x_0) - \phi^*\big)\qquad\hbox{where}\qquad Q=\left[1-\dfrac{\alpha\mu\tau_*^2\Psi(\tau_*)}{(\alpha+1)^2}\right]<1.$$
By definition, $(m+1)\tau^*>t$, which entails $m>\frac{t}{\tau^*}-1$. Since $Q\in(0,1)$, we have
$$Q^m \le Q^{\frac{t}{\tau^*}-1}=\dfrac{1}{Q}\exp\left(\dfrac{\ln(Q)}{\tau^*}\,t\right),$$
and the result is established, with $C=Q^{-1}$ and $K=-\frac{1}{\tau^*}\ln(Q)$. The proof is finished due to the fact that $\phi(u)\le\phi^*+\frac{L}{2}\|u-u^*\|^2$ for every $u^*\in\argmin(\phi)$.\hfill$\square$\bigskip \\

The convergence rate given in Theorem \ref{teo:convergence_rate}, holds for $C$ and $K$ of the form
$$C=C(\tau_*)=\left[1-\dfrac{\alpha\mu\tau_*^2\Psi(\tau_*)}{(\alpha+1)^2}\right]^{-1}$$
and
$$K=K(\tau_*)=-\frac{1}{\tau_*}\exp \left[ -\dfrac{(\alpha+1)^2}{2\alpha\mu\tau_*^2\Psi(\tau_*)} \right]\ln\left[1-\dfrac{\alpha\mu\tau_*^2\Psi(\tau_*)}{(\alpha+1)^2}\right]> \dfrac{\alpha\mu\tau_*\Psi(\tau_*)}{(\alpha+1)^2}\exp \left[ -\dfrac{(\alpha+1)^2}{2\alpha\mu\tau_*^2\Psi(\tau_*)} \right],$$
for any $\tau_*\in(0,\tau_2)\cap(0,T_*]$. In view of \eqref{E:taus} and Corollary \ref{C:inf}, $\tau_*=\tau_3$ is a valid choice. On the other hand, t    he function $K(\cdot)$ vanishes at $\tau\in\{0,\tau_2\}$ and is positive on $(0,\tau_2)$. By continuity, it attains its maximum at some $\hat\tau_*\in(0,\tau_2)\cap(0,T_*]$. Therefore, $K(\hat \tau_*)$ yields the fastest convergence rate prediction in this framework. 

\begin{remark}
	It is possible to implement a fixed restart scheme. To this end, we modify Definition \ref{def:restarted} by setting $T_i\equiv \tau$, with any $\tau\in(0,\tau_2)\cap(0,T_*]$, such as $\hat\tau_*$ or $\tau_3$, for example. In theory, $\hat\tau_*$ gives the same convergence rate as the original restart scheme presented throughout this work. From a practical perspective, though, restarting the dynamics too soon may result in a poorer performance. Therefore, finding larger values of $\hat\tau_*$ and $\tau_3$ is crucial to implement a fixed restart (see Remarks \ref{R:tau_3} and \ref{r:bound}).
\end{remark}

\if{
	\begin{remark} The constants $K_1$ and $K_2$ are expressed in terms of other constants, to simplify the notation. Next, a detailed explanation of both constants is provided. 
		\begin{itemize}
			\item First, $K_1$ depends on the constant $C_1$ provided in Proposition \ref{P:reduction}, which is defined as 
			$$C_1=\dfrac{\alpha}{2(\alpha+1)^2}C_0 T_*^2 \mu,$$
			where $C_0=(10/11)^2$, and $T_*$ is provided on Lemma \ref{lem:lemma2}. 
			Notice that if $\beta=0$, we obtain that $K_1 = \exp(\tilde{C} \mu / L)$, which is a similar result to the one obtained by Su, Boyd and Candés in \cite{JMLR:v17:15-084}. 
			\item The constant $K_2$ depends on $C_2$, used in the proof of Lemma \ref{lem:lemma4}, which is 
			$$C_2 = \dfrac{(\alpha+1)^2}{\alpha C_0 T_*^2 }.$$
			Using that, 
			$$K_2 = \dfrac{\alpha}{2(\alpha+1)^2}\cdot \left( \dfrac{10}{11}\right)^2 T_* \mu \exp \left( -\dfrac{(\alpha+1)^2}{\alpha C_0 T_*^2 \mu}  \right) $$
		\end{itemize}
		From the definition of $T_*$, 
		$$T_*=-\rho_{\alpha}\beta + \sqrt{\rho_{\alpha}^2 \beta^2 + \dfrac{3}{\alpha L}} = \dfrac{3}{\alpha L\left( \sqrt{\rho_{\alpha}^2 \beta^2 + \frac{3}{\alpha L}}  + \rho_{\alpha} \beta \right)},$$
		with $\rho_{\alpha}=(\alpha+3)/(\alpha+2)$. Notice that the value of $T_*$ is decreasing in $\beta \geq 0$. Then, the value of $K_2$ decreases fast on $\beta$, so this rate of convergence is faster when $\beta=0$. It can be seen, numerically, that the upper bound on Lemma \ref{lem:lemma4} is not tight, so the authors have the intuition that the constant $K_2$ can be significantly improved. 
	\end{remark}
}\fi 

	\section{Numerical illustration} \label{sec:illustration}

In this section, we provide a very simple numerical example to illustrate how the convergence is improved by the restarting scheme. A more thorough numerical analysis will be carried out in a forthcoming paper, where implementable optimization algorithms will be analyzed. \\


\subsection{Example \ref{EG:AVD_GIN_R3} revisited}\label{sec:example_revisited}

We consider the quadratic function $\phi:\R^3\to\R$, defined in Example \ref{EG:AVD_R3} by \eqref{eq:Phi_R3}, with $\rho = 10$. We set $\alpha=3.1$ and $\beta=0.25$, and compute the solutions of \eqref{eq:avd_intr} and \eqref{eq:din_avd}, starting from  $x(1)=x_1=(1,1,1)$ and zero initial velocity, with and without restarting, using the \texttt{Python} tool \texttt{odeint} from the \texttt{scipy} package. Figure \ref{fig:cont} shows a comparison of the values along the trajectory with and without restarting, first for \eqref{eq:avd_intr}, and then for \eqref{eq:din_avd}. In both cases, the restarted trajectories appear to be more stable and converge faster. 



\begin{figure}[h]
	\centering
	{\includegraphics[width=0.4\textwidth]{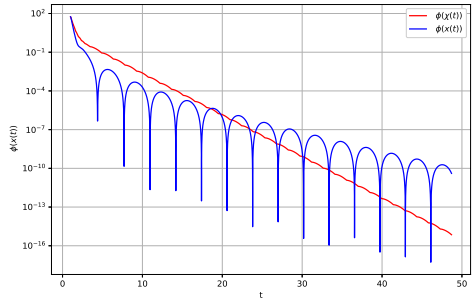}
	}
	\caption{Values along the trajectory, with (red) and without (blue) restarting, for 
		\eqref{eq:din_avd}
		.}
	\label{fig:cont}
\end{figure}


However, one can do better. As mentioned earlier, restarting schemes based on function values are effective from a practical perspective, but show an erratic behavior as the trajectory approaches a minimizer. It seems natural as a heuristic to use the first (or $n$-th) function-value restart point as a warm start, and then apply speed restarts, for which we have obtained convergence rate guarantees. Although the velocity must be set to zero {\it after} each restart, there are no constraints on the initial velocity used to compute the warm starting point. The results are shown in Figure \ref{fig:cont4}, with initial velocity set to zero and $\dot{x}(1)=-\beta\nabla\phi(x_1)$, respectively.\\


\begin{figure}[ht]
	\centering
	{\includegraphics[width=0.4\textwidth]{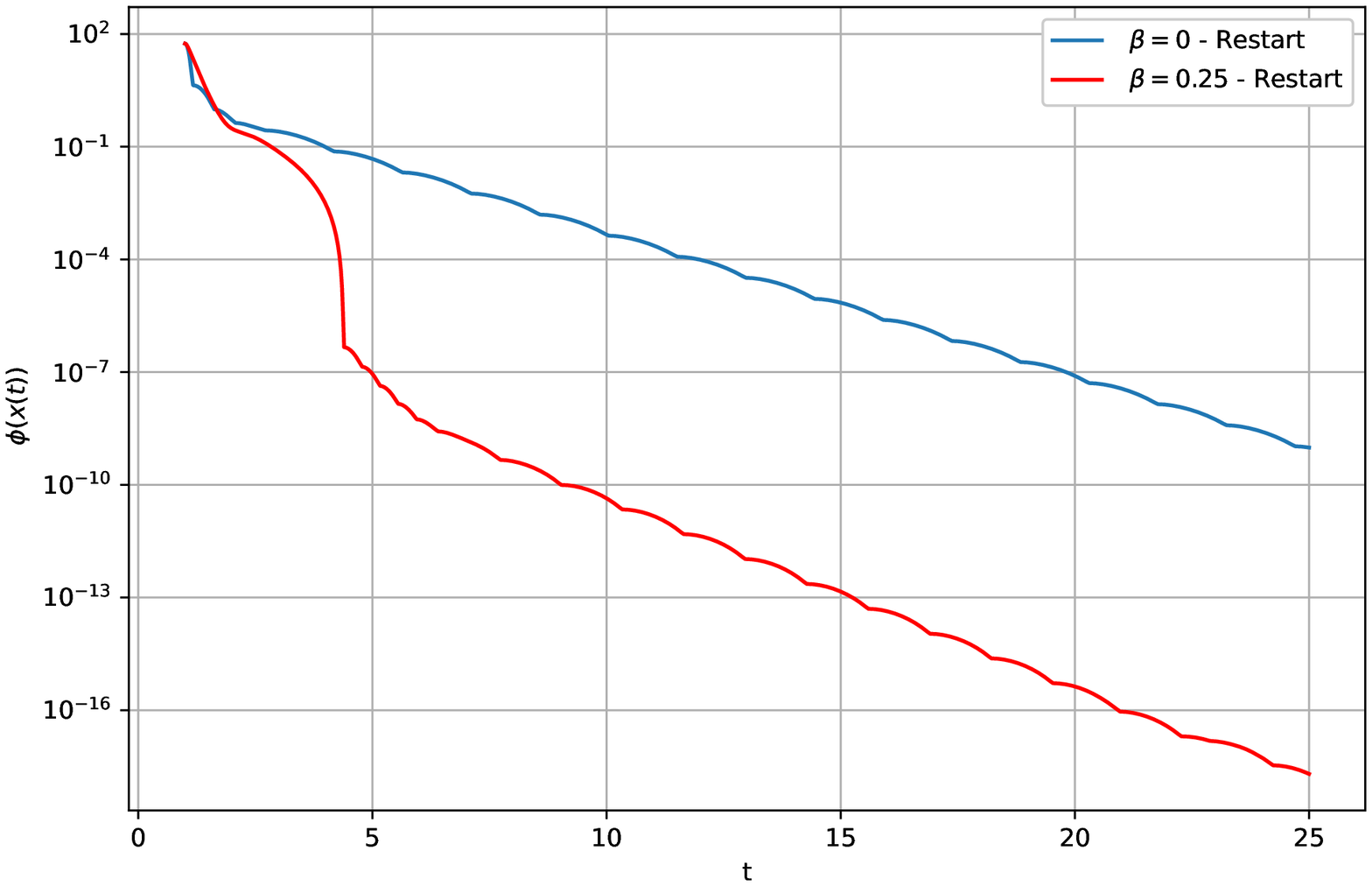}}
	{\includegraphics[width=0.4\textwidth]{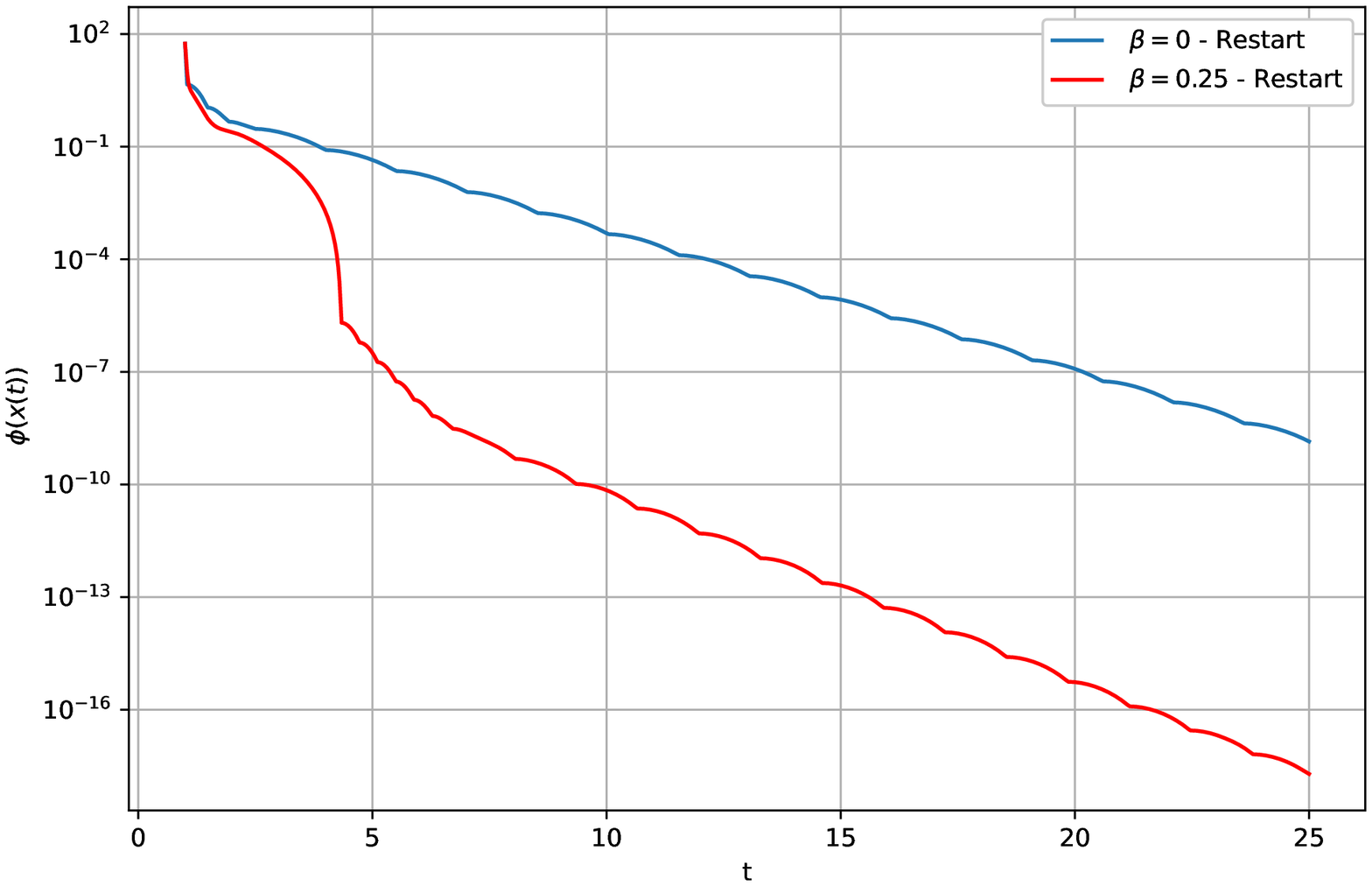}}
	{\includegraphics[width=0.4\textwidth]{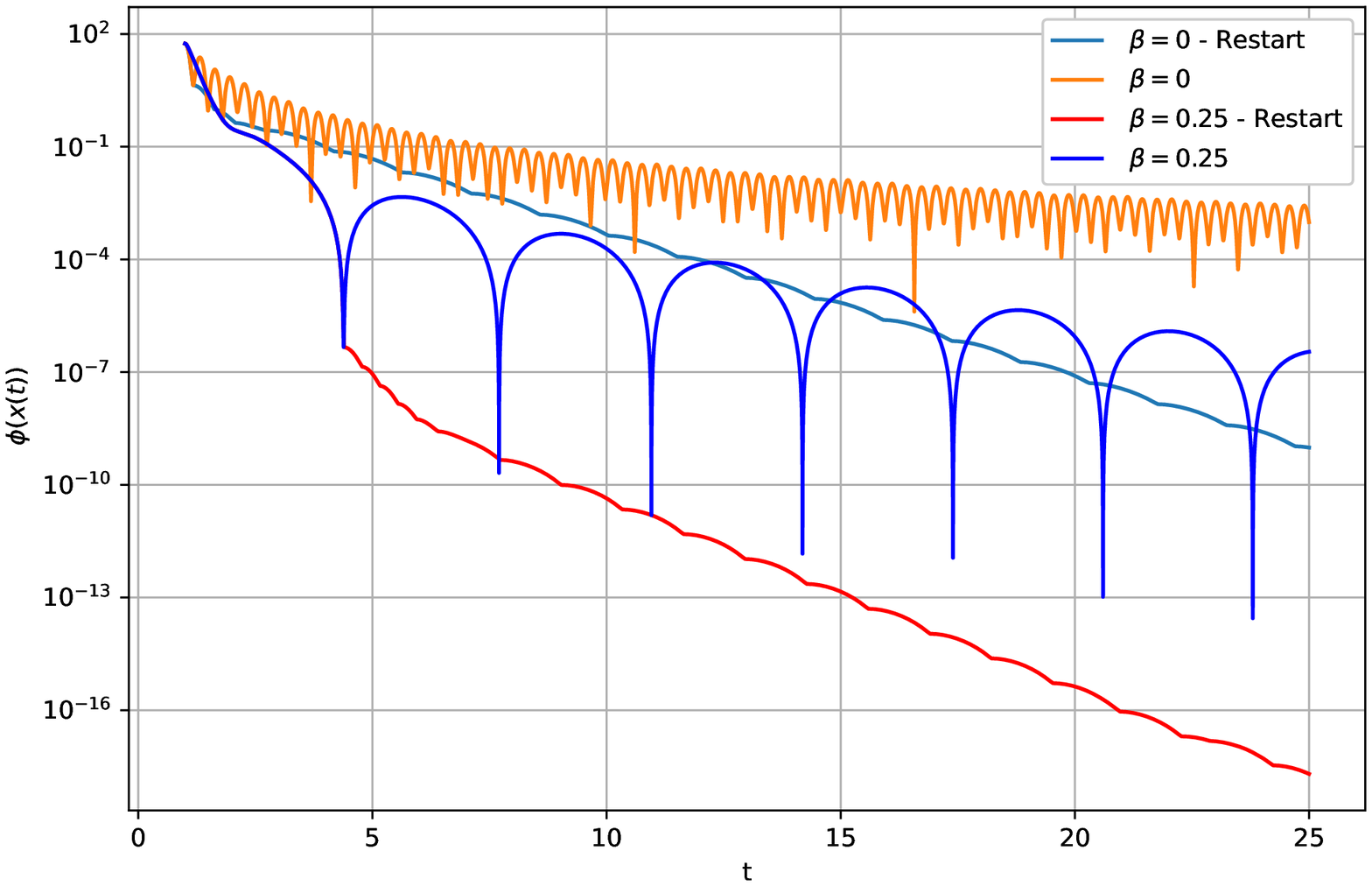}}
	{\includegraphics[width=0.4\textwidth]{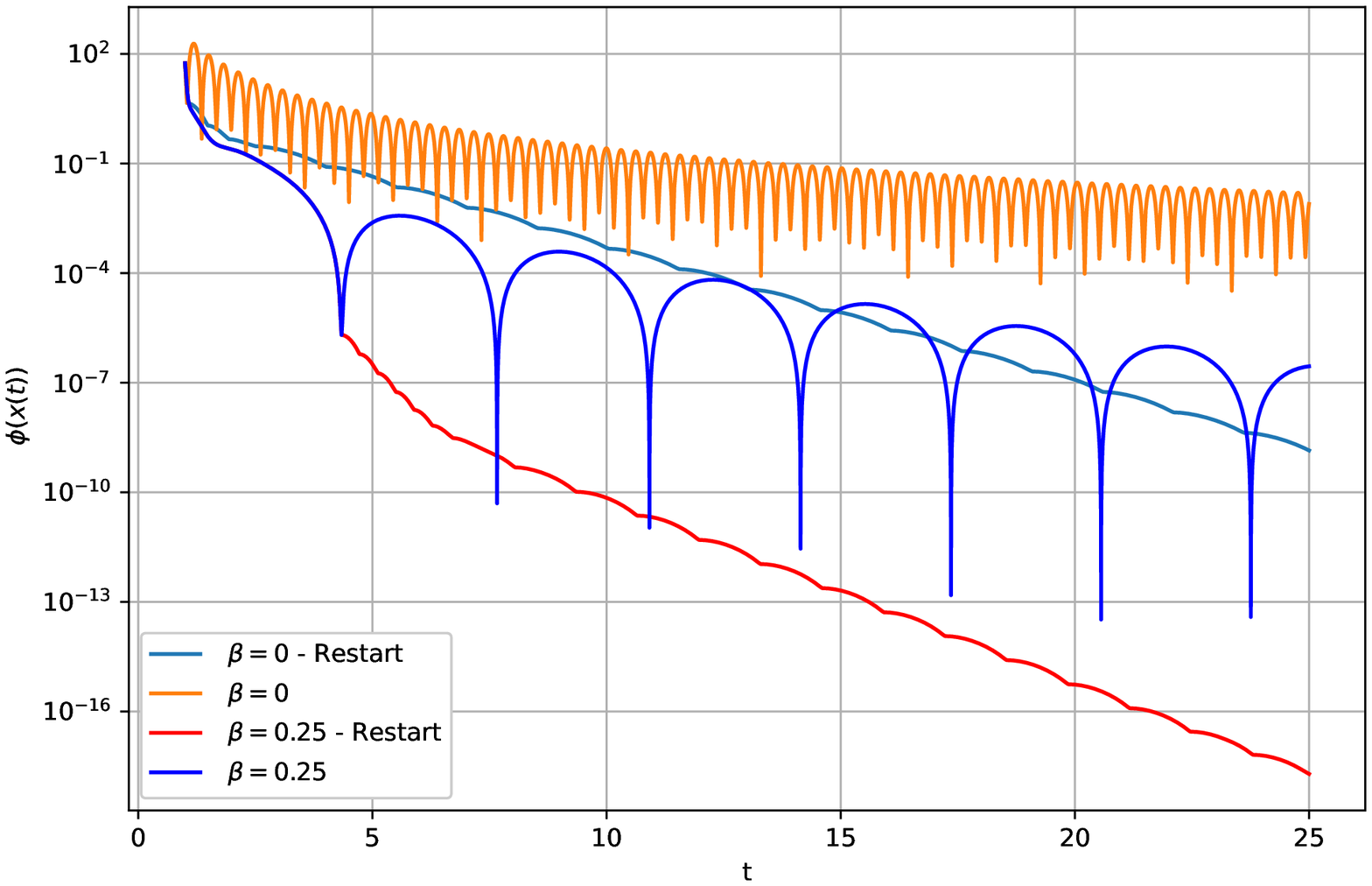}}
	\caption{Top: Values along the trajectory, with warm start, for \eqref{eq:avd_intr} (blue) and \eqref{eq:din_avd} (red), with inicial velocity set to zero (left) and $\dot{x}(1)=-\beta\nabla\phi(x_1)$ (right). Bottom: Includes trajectories without restarting, for reference.}
	\label{fig:cont4}
\end{figure}


A linear regression after the first restart provides estimations for the linear convergence rate of the function values along the corresponding trajectories, when modeled as $\phi\big(\chi(t)\big)\sim Ae^{-Bt}$, with $A,B>0$. The results are displayed in Table \ref{tab:coef_regression}. The absolute value of the exponent $B$ in the linear convergence rate is increased by 34,67\% in the case $\dot{x}(1)=0$, and by 39,86\% in the case $\dot{x}(1)=-\beta \nabla \phi (x_1)$. Also, the minimum values for the methods presented in Figure \ref{fig:cont4} can be analyzed. The last and best function values on $[1,25]$ are displayed on Table \ref{tab:values_t25}. In all cases, the best value without restart is approximately $10^4$ times larger than the one obtained with our policy. We also observe similar final values for the restarted trajectories despite the different initial velocities. 

\begin{table}[h]
	\centering
	\begin{tabular}{ccc|cc}
		\toprule
		&\multicolumn{2}{c}{$\dot{x}(1)=0$} &\multicolumn{2}{c}{$\dot{x}(1)=-\beta \nabla \phi (x_1)$} \\
		\midrule
		& $\beta=0$ & $\beta=0.25$ & $\beta=0$ & $\beta=0.25$\\
		\midrule
		$A$ & 3.7545 & 8.16e-6 & 3.2051 & 1.65e-05\\
		$B$ & 0.8837 & 1.1901 & 0.859 & 1.2014 \\
		\bottomrule
	\end{tabular}
	\caption{Coefficients in the linear regression, when approximating $\phi\big(\chi(t)\big)\sim Ae^{-Bt}$.}
	\label{tab:coef_regression}
\end{table}

\begin{table}[ht]
	\centering
	\begin{tabular}{rcc|cc}
		\toprule
		&\multicolumn{2}{c}{$\dot{x}(1)=0$} &\multicolumn{2}{c}{$\dot{x}(1)=-\beta \nabla \phi (x_1)$} \\
		\midrule
		& $\beta=0$ & $\beta=0.25$ & $\beta=0$ & $\beta=0.25$\\
		\midrule
		Last value without restart & 0.0009 & 3.4793e-07 & 0.0079 & 2.8094e-07 \\
		Best value without restart & 4.0697e-06 & 2.8024e-14 & 3.2770e-05 & 3.2760e-14 \\
		Last/best value with restart and warm start &9.8118e-10 & 2.0103e-18 &1.3940e-09 & 1.9452e-18  \\
		\bottomrule
	\end{tabular}
	\caption{Values reached for $\phi$ at $t=25$.}
	\label{tab:values_t25}
\end{table}


	

\subsection{A first exploration of the algorithmic consequences}

Different discretizations of \eqref{eq:din_avd} can be used to design implementable algorithms and generate minimizing sequences for $\phi$, which hopefully will share the stable behavior we observe in the solutions of \eqref{eq:din_avd}. Three such algorithms were first proposed in \cite{attouch2020first}, for which we implemented a speed restart scheme, analogue to the one we have used for the solutions of \eqref{eq:din_avd}. Since we obtained very similar results and the numerical analysis of algorithms is not the focus of this paper, we describe only the simplest one in detail, and present the numerical results for that one. As in \cite{JMLR:v17:15-084}, a parameter $k_{\min}$ is introduced, to avoid two consecutive restarts to be too close.


\begin{algorithm}[h]
	\SetAlgoLined
	Choose $x_0$, $x_1 \in \R^n$, $N$, $k_{\min}$ and $h>0$. \\
	\For{$k=1 \ldots N$}{
		\text{Compute} $y_k= x_k + (1-\frac{\alpha}{k})(x_k-x_{k-1}) - \beta h(\nabla \phi(x_k) - \nabla \phi(x_{k-1}))$, \\
		\text{and then} $x_{k+1} = y_k - h^2 \nabla \phi(y_k)$. \\
		\uIf{$\norm{x_{k+1} - x_{k}}<\norm{x_k - x_{k-1}}$ {\bf and} $k\geq k_{\min}$}{k=1;}
		\uElse{k=k+1.}
	}
	\Return $x_N$.
	\caption{Inertial Gradient Algorithm with Hessian Damping (IGAHD) - Speed Restart version}
	\label{algorithm:restart}
\end{algorithm}


	
	\begin{example} \label{EG:algo1}
		We begin by applying Algorithm \ref{algorithm:restart}, as well as the variation with the warm start, to the function $\phi:\R^3 \mapsto \R $ in Examples \ref{EG:AVD_R3} and \ref{EG:AVD_GIN_R3}, with the parameters $k_{\min}=10$, $\beta=h=1/\sqrt{L}$ and $\alpha=3.1$. Figure \ref{fig:discrete2} shows the evolution of the function values along the iterations. The coefficients in the approximation $\phi(x_k) \sim Ae^{-Bt}$, with $A,B >0$, obtained for each algorithm, are detailed on Table \ref{tab:coefs_discrete_ill}. As one would expect, the value of $B$ is similar and that of $A$ is significantly lower. Also, Table \ref{tab:algo1} shows the values obtained along 1000 iterations. The best value without restart is $10^5$ times larger than the one obtained with our policy.

		\begin{figure}[ht]
			\centering
			{\includegraphics[width=0.4\textwidth]{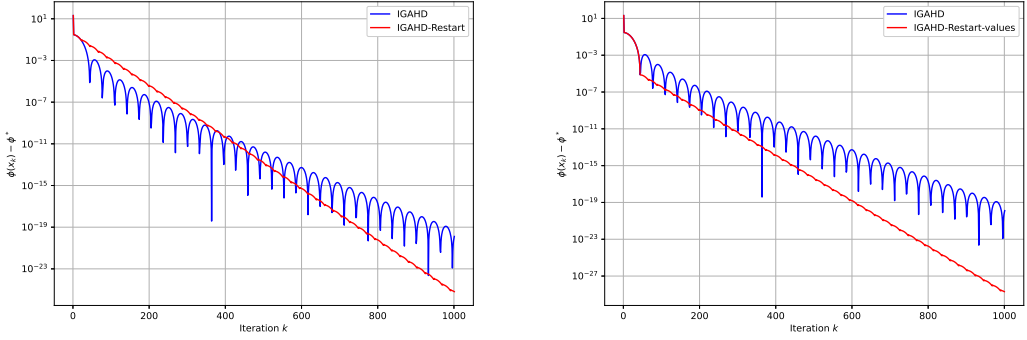}}
			{\includegraphics[width=0.4\textwidth]{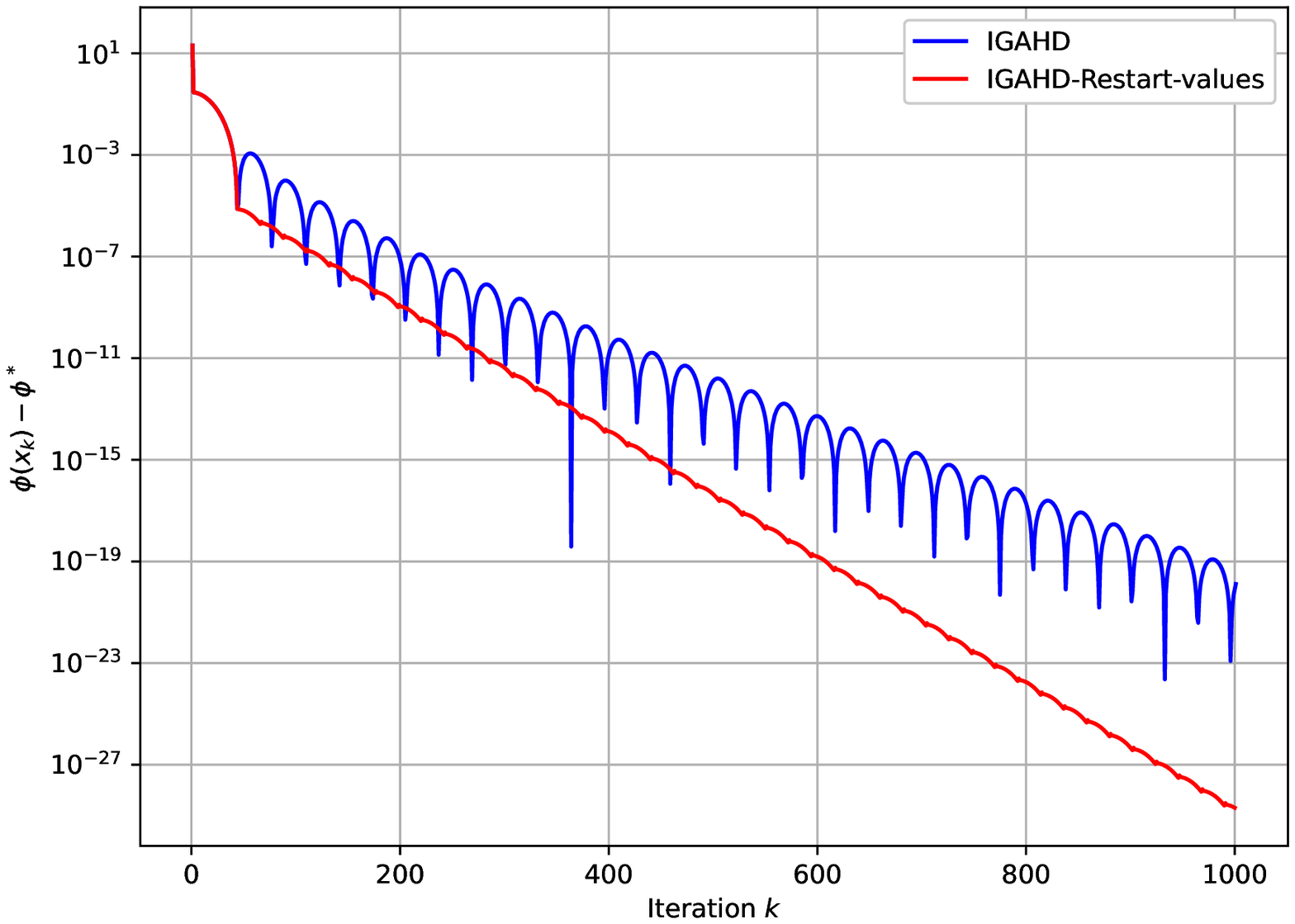}}
			
			\caption{Function values along iterations of Algorithm \ref{algorithm:restart} without (left) and with (right) warm start.}
			\label{fig:discrete2}
		\end{figure}

		\begin{table}[ht]
			\centering
			\begin{tabular}{c|c|c}
				\toprule
				& Algorithm \ref{algorithm:restart}  & Algorithm \ref{algorithm:restart} with warm start  \\
				\midrule
				$A$ & 0.3722 & 1.0749e-4 \\
				$B$ & 0.0571 & 0.057 \\
				\bottomrule
			\end{tabular}
			\caption{Coefficients in the linear regression for Example \ref{EG:algo1}.}
			\label{tab:coefs_discrete_ill}
		\end{table}

		\begin{table}[ht]
			\centering
			\begin{tabular}{rcr}
				\toprule
				Last iteration without restart & \quad &  1.2927e-20 \\
				Best iteration without restart & & 2.2907e-24 \\
				Last/best iteration with restart and warm start & &  2.0206e-29 \\
				\bottomrule 
			\end{tabular}
			\caption{Functions values for Example \ref{EG:algo1}.}
			\label{tab:algo1}
		\end{table}

		

	\end{example}

	

	\begin{example} \label{EG:algo2}
		Given a positive definite symmetric matrix $A$ of size $n\times n$, and a vector $b \in \R^n$, define $\phi:\R^n \mapsto \R$ by
		$$\phi(x)=\dfrac{1}{2}x^T A x + b^Tx.$$
		
		For the experiment, $n=500$, $A$ is randomly generated with eigenvalues in $(0,1)$, and $b$ is also chosen at random. We first compute $L$, and set $k_{\min}=10$, $h=1/\sqrt{L}$, $\alpha=3.1$ and $\beta=h$. The initial points $x_0=x_1$ are generated randomly as well. Figure \ref{fig:discrete1} shows the comparison for Algorithm \ref{algorithm:restart} and a variation of it giving a warm start as the one described in the continuous setting. That is, to restart the first time when the function increases instead of decrease, and then performing the speed restart detailed on Algorithm \ref{algorithm:restart}. It can be seen, that the restart scheme stabilizes and accelerates the convergence in both cases. The coefficients obtained for each algorithm in the approximation $\phi(x_k) \sim Ae^{-Bt}$, with $A,B >0$, are presented in Table \ref{tab:coefs_discrete_quad}. Also, Table \ref{tab:algo2} shows the value gaps obtained along 1800 iterations. The best value without restart is more than $10^4$ times larger than the one obtained with restart.
		
		
		\begin{figure}[ht]
			\centering
			{\includegraphics[width=0.4\textwidth]{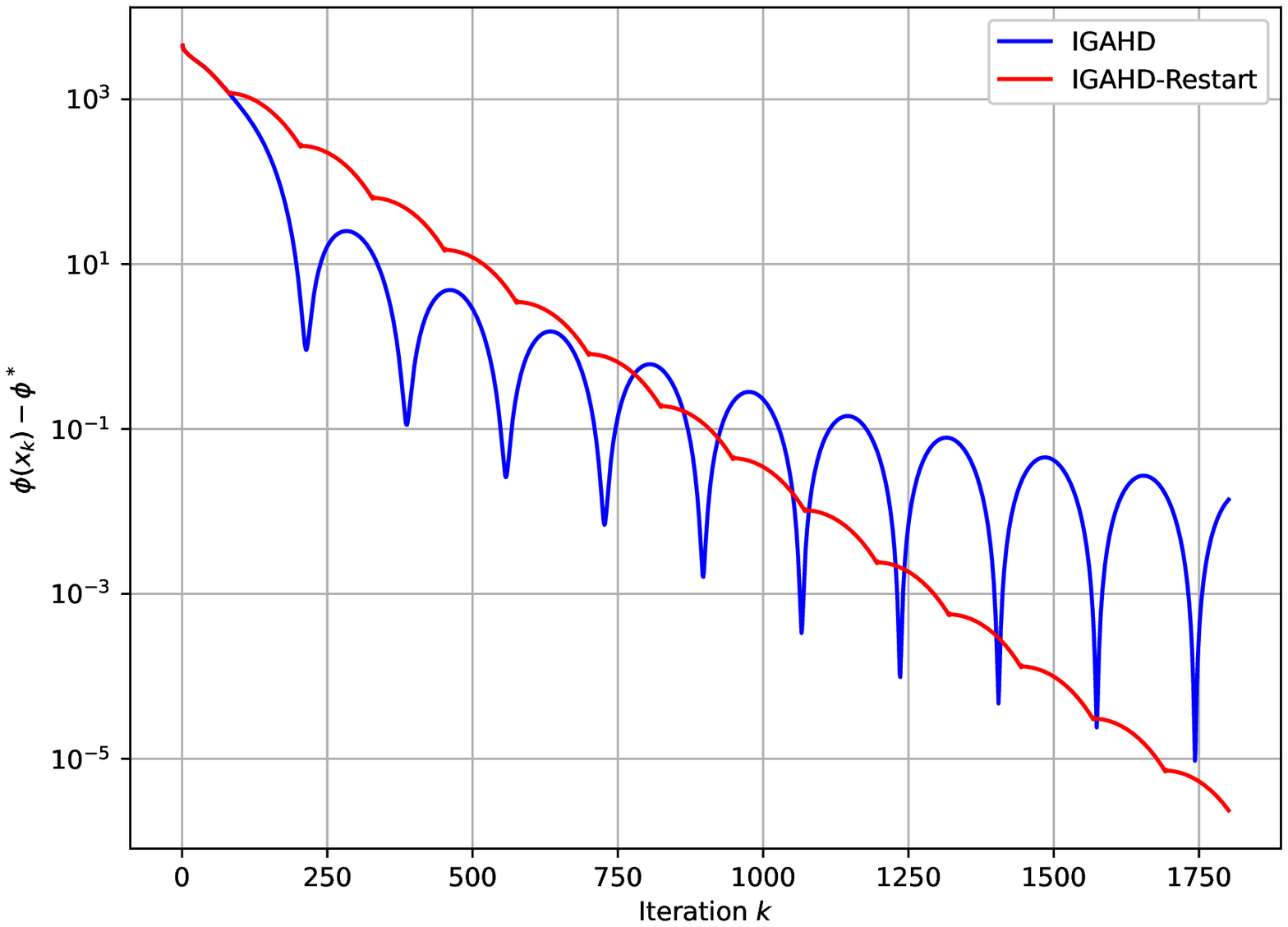}}
			{\includegraphics[width=0.4\textwidth]{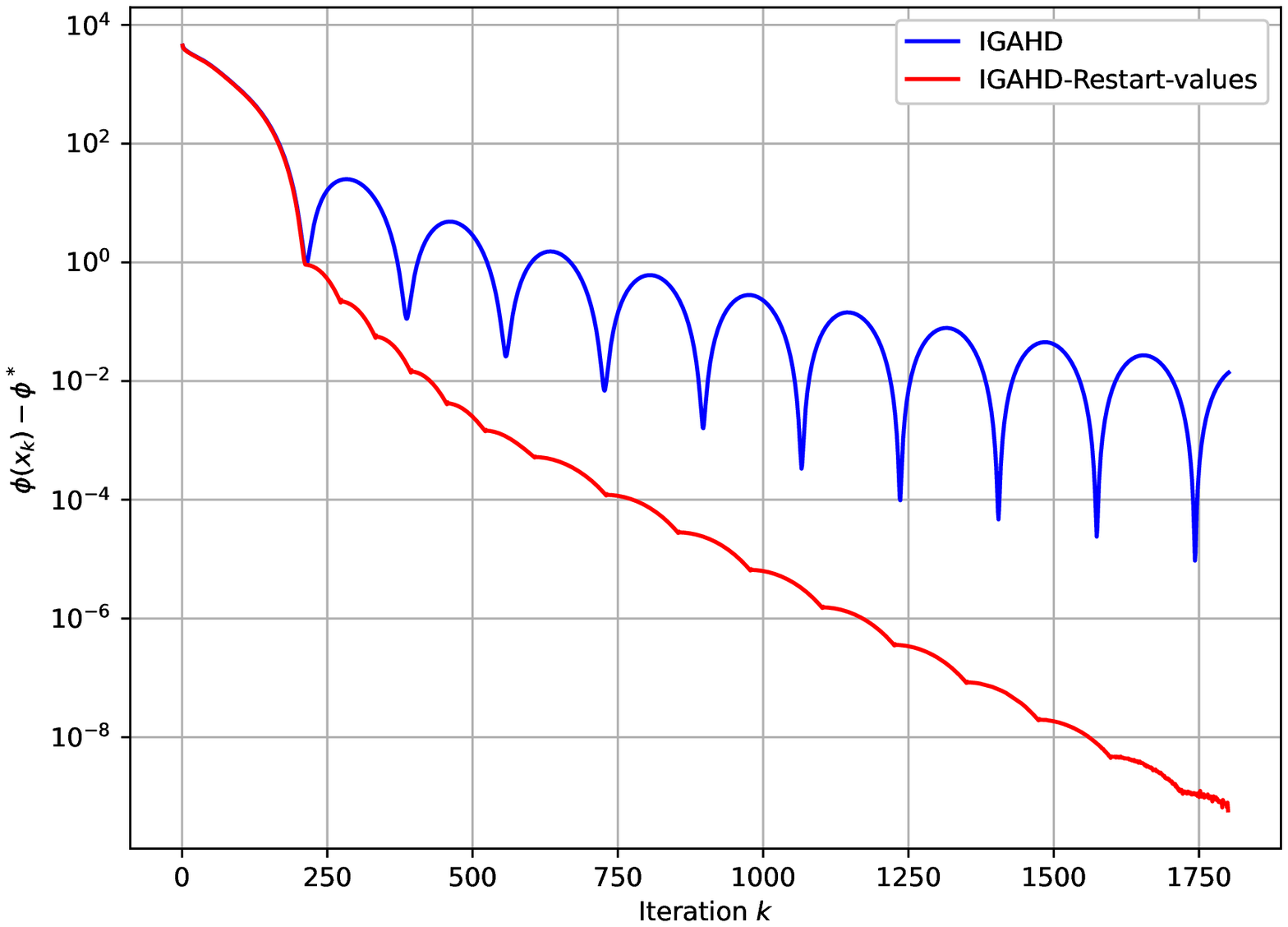}}
			
			\caption{Function values along iterations of Algorithm \ref{algorithm:restart} without (left) and with (right) warm start.}
			\label{fig:discrete1}
		\end{figure}
		
		\begin{table}[ht]
			\centering
			\begin{tabular}{c|c|c}
				\toprule
				& Algorithm \ref{algorithm:restart}  & Algorithm \ref{algorithm:restart} with warm start  \\
				\midrule
				$A$ & 3813.01& 1.6142 \\
				$B$ & 0.0117 & 0.0121 \\
				\bottomrule
			\end{tabular}
			\caption{Coefficients in the linear regression for Example \ref{EG:algo2}.}
			\label{tab:coefs_discrete_quad}
		\end{table}
		
		\begin{table}[ht]
			\centering
			\begin{tabular}{rcr}
				\toprule
				Last iteration without restart & \quad &  0.0139 \\
				Best iteration without restart & & 9.4293e-06 \\
				Last/best iteration with restart and warm start & &  5.8481e-10 \\
				\bottomrule 
			\end{tabular}
			\caption{Function values for Example \ref{EG:algo2}.}
			\label{tab:algo2}
		\end{table}
		
		

	\end{example}

	\appendix 
	\section{Appendix: Proof of Theorem \ref{theo:existence}} \label{Appendix}

Consider the differential equation
\begin{equation}\label{eq:edo_general2}
	\ddot{x}(t) + \gamma(t)\dot{x}(t) + \mathcal F\big(x(t)\big)\dot{x}(t)+\mathcal G\big(x(t)\big)=0.
\end{equation}
We assume that $\gamma$ is continuous and positive, with $\lim_{t\to0}\gamma(t)=+\infty$, and that $\cal F$ and $\cal G$ are (continuous and) sufficiently regular so that the differential equation \eqref{eq:edo_general2}, with initial condition $x(\delta)=x_\delta$ and $\dot x(\delta)=v_\delta$, has a unique solution defined on $[\delta,T_\infty)$ for some $T_\infty\in(0,\infty]$ and all $\delta>0$. Let
\begin{equation}\label{eq:def_M2}
	M(\delta,t):=\sup_{s \in [\delta,t]}\big\{ \gamma(s)\norm{\dot{x}_\delta(s)-v_0} \big\}.
\end{equation}

We have the following:

\begin{theorem}\label{theo:general2} Assume there is $T>0$ such that 
	\begin{equation}\label{hip:bound_M2}
		\sup_{0<\delta\le t\le T}M(\delta,t) < +\infty.
	\end{equation}
	Then, the differential equation \eqref{eq:edo_general2}, with initial condition $x(0)=x_0$ and $\dot x(0)=v_0$, has a solution. 
\end{theorem}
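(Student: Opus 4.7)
The strategy is the classical limiting procedure for singular ODEs. For each $\delta > 0$, let $x_\delta : [\delta, T^\delta_\infty) \to \R^n$ denote the unique maximal solution of (\ref{eq:edo_general2}) with Cauchy data $x_\delta(\delta) = x_0$, $\dot x_\delta(\delta) = v_0$, whose existence is guaranteed by the standing local well-posedness assumption. The plan is to extract a subsequence $\delta_n \to 0^+$ along which $x_{\delta_n}$ converges in a topology strong enough both to preserve the ODE on $(0, T]$ and to recover the Cauchy data at $t = 0$.

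The first step is to pull uniform estimates out of hypothesis (\ref{hip:bound_M2}). With $M_* := \sup_{0 < \delta \le t \le T} M(\delta, t) < \infty$, the definition of $M(\delta,t)$ yields
$$\|\dot x_\delta(s)\| \le \|v_0\| + \frac{M_*}{\gamma(s)} \le \|v_0\| + \frac{M_*}{\gamma_{\min}}, \qquad s \in [\delta, \min(T, T^\delta_\infty)),$$
where $\gamma_{\min} := \inf_{s \in (0,T]} \gamma(s) > 0$ since $\gamma$ is continuous, positive, and blows up at $0$. Integrating gives a uniform bound on $\|x_\delta\|$, so a standard maximal-extension argument ensures $T^\delta_\infty > T$. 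Extending each $x_\delta$ continuously to $[0,T]$ by $x_\delta(t) = x_0 + (t-\delta)v_0$ on $[0,\delta]$, the family $\{x_\delta\}$ is uniformly bounded and uniformly Lipschitz on $[0,T]$.

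The second step is Arzelà-Ascoli applied at two scales. On any compact subinterval $[\epsilon, T] \subset (0,T]$, the ODE itself, combined with the continuity of $\gamma$, $\mathcal F$, $\mathcal G$ and the bounds on $(x_\delta, \dot x_\delta)$ already obtained, delivers a uniform bound on $\ddot x_\delta$; in particular $\{\dot x_\delta\}$ is equicontinuous on $[\epsilon,T]$. A diagonal extraction as $\epsilon \to 0^+$ then produces $\delta_n \to 0$ such that $x_{\delta_n} \to x$ uniformly on $[0,T]$ and $\dot x_{\delta_n} \to \dot x$ uniformly on each $[\epsilon,T]$. Passing to the limit in (\ref{eq:edo_general2}) on $[\epsilon,T]$ shows that $x \in C^2((0,T];\R^n)$ solves the ODE on $(0,T]$. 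The condition $x(0) = x_0$ is immediate from uniform convergence and $x_{\delta_n}(0) = x_0$; for $\dot x(0) = v_0$, I would pass the pointwise inequality $\|\dot x_{\delta_n}(s) - v_0\| \le M_*/\gamma(s)$ to the limit (valid on $[\epsilon,T]$, then $\epsilon \to 0$) and use $\gamma(s) \to \infty$ as $s \to 0^+$, which also yields $C^1$-regularity at $0$ via the fundamental theorem of calculus.

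The main obstacle is that the singular damping $\gamma(t)\dot x(t)$ precludes any uniform $C^2$-bound on $\{x_\delta\}$ near $t = 0$, so a one-shot Arzelà-Ascoli argument on all of $[0,T]$ is unavailable. This is why compactness must be split into a global $C^0$-bound and a local $C^1$-bound away from $0$, and why the continuity of $\dot x$ at $0$ with value $v_0$ is recovered not from equicontinuity but from the explicit quantitative rate $M_*/\gamma(s) \to 0$ supplied by (\ref{hip:bound_M2}). Once these estimates are in hand, the rest is routine bookkeeping.
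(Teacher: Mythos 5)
Your proof is correct, but it rests on a genuinely different compactness mechanism from the one the paper uses, and the comparison is worth making explicit. The paper first establishes the same uniform estimate $\|\dot x_\delta - v_0\| \le K/c$, from which it concludes that $(x_\delta)$ is bounded in $H^1(0,T;\R^n)$; it then invokes weak sequential compactness together with Rellich--Kondrachov to obtain $x_{\delta_n}\to x^*$ uniformly and $\dot x_{\delta_n}\rightharpoonup y^*$ weakly in $L^2$, and passes to the limit in an \emph{integral} reformulation of \eqref{eq:edo_general2} obtained by multiplying through by the integrating factor $\Gamma(t)=\exp\bigl(\int_{t_0}^t\gamma(s)\,ds\bigr)$ --- a reformulation in which no second derivative appears, the velocity enters either pointwise at one endpoint or inside integrals against continuous weights, and so weak $L^2$ convergence is exactly enough. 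You instead obtain \emph{strong} $C^1$ compactness on every $[\epsilon,T]$: since $\gamma$, $\mathcal F$, $\mathcal G$ are continuous and $x_\delta$, $\dot x_\delta$ are uniformly bounded, the ODE itself bounds $\ddot x_\delta$ uniformly on $[\epsilon,T]$, so Arzel\`a--Ascoli applies there, and a diagonal extraction over $\epsilon=1/k$ produces a limit satisfying the equation classically on $(0,T]$. Your route is more elementary (no Sobolev-space machinery, no integrating factor), at the cost of a diagonal argument; the paper's route avoids the diagonal step and packages the singular damping neatly into $\Gamma$. The recovery of the Cauchy data at $t=0$ is handled identically in both, via the quantitative decay $\|\dot x(s)-v_0\|\le M_*/\gamma(s)\to 0$. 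One small slip in your writeup: with the extension $x_\delta(t)=x_0+(t-\delta)v_0$ on $[0,\delta]$ one has $x_\delta(0)=x_0-\delta v_0$, not $x_0$; the conclusion $x(0)=x_0$ still follows since $x_{\delta_n}(0)\to x_0$, but as stated the claim ``$x_{\delta_n}(0)=x_0$'' is false. (The paper's choice $x_\delta(\delta)=x_0+\delta v_0$ with extension $x_0+tv_0$ avoids this.)
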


\begin{proof}
	For $\delta\in(0,T)$, define $x_\delta:[0,T]\to\R^n$ as follows: for $t\in[0,\delta]$, $x_\delta(t)=x_0+tv_0$; and for $t>\delta$, $x_\delta$ is the solution of \eqref{eq:edo_general2} with initial condition $x(\delta)=x_0+\delta v_0$ and $\dot x(\delta)=v_0$. Notice that $x_{\delta}$ is a continuous function such that matches a solution of \eqref{eq:edo_general2} on $[\delta,T]$ . From the hypotheses, there exist $c,K>0$ and such that $\gamma(t)\ge c$ and $M(\delta,t)\le K$ for all $0<\delta\le t\le T$. Therefore,
	$$c\,\|\dot x_\delta(s)-v_0\|\le \gamma(s)\|\dot x_\delta(s)-v_0\|\le M(\delta,t)\le K$$
	whenever $0<\delta\le s\le t\le T$, so that
	$$\|\dot x_\delta(s)-v_0\|\le \frac{K}{c},$$
	for all $s\in[0,T]$. As a consequence,
	$$\|x_\delta(s)-x_0\|\le \int_0^s\|\dot x_\delta(\tau)\|\,d\tau \le \|v_0\|\delta+\frac{KT}{c}$$
	on $[0,T]$. It follows that $(x_\delta)$ is bounded in $H^1(0,T;\R^n)$.
	By weak sequential compactness and the Rellich–Kondrachov Theorem (see, for instance \cite[Theorem 9.16]{brezis_functional_2011}), there is a sequence $(\delta_n)$ converging to zero, such that $x_{\delta_n}$ converges uniformly to a continuous function $x^*$, while $\dot x_{\delta_n}$ converges weakly in $L^2(0,T;\R^n)$ to some $y^*$. \\
	
	Clearly, $x^*(0)=x_0$. In turn, for $t\in(0,T]$, by the Mean Value Theorem and the definition of $M$, we have
	$$\left\|\frac{x^*(t)-x_0}{t}-v_0\right\|=\lim_{n\to\infty}\left\|\frac{x_{\delta_n}(t)-x_0}{t}-v_0\right\|=\lim_{n\to\infty}\|\dot x_{\delta_n}(c_n)-v_0\|\le \frac{\bar K}{\min\limits_{s\in(0,t]}\gamma(s)},$$
	which tends to zero as $t\to 0$. It remains to prove that $x^*$ satisfies \eqref{eq:edo_general2}. To this end, take any $t_0\in(0,T)$, and observe that $\delta_n<t_0$ for all sufficiently large $n$. Therefore, $x_{\delta_n}$ satisfies \eqref{eq:edo_general2} on $[t_0,T)$ for all such $n$. Multiplying by
	$$\Gamma(t):=\exp\left(\int_{t_0}^t\gamma(s)\,ds\right),$$
	we deduce that
	$$\Gamma(t)\dot x_{\delta_n}(t)-\Gamma(t_0)\dot x_{\delta_n}(t_0)+\int_{t_0}^t\Gamma(s)\,\mathcal F\big(x_{\delta_n}(s)\big)\dot x_{\delta_n}(s)\,ds+\int_{t_0}^t\Gamma(s)\,\mathcal G\big(x_{\delta_n}(s)\big)\,ds=0.$$
	By taking yet another subsequence if necessary, we may assume that $\dot x_{\delta_n}(t_0)$ converges to some $v^*$. From the uniform convergence of $x_{\delta_n}$ to $x^*$ on $[0,T]$, and the weak convergence of $\dot x_{\delta_n}$ to $y^*$ in $L^2(0,T;\R^n)$, it ensues that
	$$\Gamma(t)y^*(t)-\Gamma(t_0)\dot v^*+\int_{t_0}^t\Gamma(s)\,\mathcal F\big(x^*(s)\big)y^*(s)\,ds+\int_{t_0}^t\Gamma(s)\,\mathcal G\big(x^*(s)\big)\,ds=0$$
	for all $t\in (t_0,T)$. As a consequence, $x^*$ is continuously differentiable, $\dot x^*=y$, and $x^*$ satisfies \eqref{eq:edo_general2}.
\end{proof}

\begin{corollary}
	Equation \eqref{eq:din_avd} has at least one solution.
\end{corollary}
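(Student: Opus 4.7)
The plan is to invoke Theorem \ref{theo:general2} directly, with the identifications $\gamma(t)=\alpha/t$, $\mathcal{F}(x)=\beta\nabla^2\phi(x)$, $\mathcal{G}(x)=\nabla\phi(x)$, and initial velocity $v_0=0$. With these choices, equation \eqref{eq:edo_general2} coincides with \eqref{eq:din_avd}, the coefficient $\gamma$ is continuous and positive on $(0,\infty)$ with $\gamma(t)\to\infty$ as $t\to 0$, and the local Lipschitz character of $\nabla\phi$ and $\nabla^2\phi$ (inherited from the $C^2$ hypothesis and the global Lipschitz constant $L$ on $\nabla\phi$) guarantees, via classical Cauchy--Lipschitz theory, that the $\delta$-initialized Cauchy problem with $x(\delta)=x_0$, $\dot x(\delta)=0$ admits a unique maximal solution $x_\delta$ for every $\delta>0$.

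The only nontrivial ingredient is verifying hypothesis \eqref{hip:bound_M2}, that is, exhibiting $T>0$ with $\sup_{0<\delta\le t\le T}M(\delta,t)<\infty$. Since $v_0=0$, we have $M(\delta,t)=\sup_{s\in[\delta,t]}(\alpha/s)\|\dot x_\delta(s)\|$, so it suffices to bound $\|\dot x_\delta(s)\|/s$ uniformly. I would mimic the argument of Lemma \ref{lem:lemma1}, but starting the integration at $\delta$ instead of $0$. Multiplying \eqref{eq:din_avd} by $t^\alpha$ and integrating on $[\delta,t]$ using $\dot x_\delta(\delta)=0$ yields
\begin{equation*}
t^\alpha\dot x_\delta(t) = -\int_\delta^t u^\alpha\bigl(\nabla\phi(x_\delta(u))-\nabla\phi(x_0)\bigr)\,du - \beta\int_\delta^t u^\alpha\nabla^2\phi(x_\delta(u))\dot x_\delta(u)\,du - \frac{t^{\alpha+1}-\delta^{\alpha+1}}{\alpha+1}\nabla\phi(x_0).
\end{equation*}
Defining $M_\delta(t)=\sup_{s\in[\delta,t]}\|\dot x_\delta(s)\|/s$ and using $x_\delta(u)-x_0=\int_\delta^u\dot x_\delta(s)\,ds$ together with the $L$-Lipschitz property of $\nabla\phi$, the same chain of inequalities that produced \eqref{E:bound_gradients} and \eqref{E:bound_Hessian} gives, after dividing by $t^{\alpha+1}$, the self-referential estimate
\begin{equation*}
M_\delta(t) \le \left[\dfrac{Lt^2}{2(\alpha+3)}+\dfrac{L\beta t}{\alpha+2}\right]M_\delta(t) + \dfrac{\|\nabla\phi(x_0)\|}{\alpha+1},
\end{equation*}
whence $M_\delta(t)\le \|\nabla\phi(x_0)\|/\bigl((\alpha+1)H(t)\bigr)$ for every $t\in(\delta,\tau_1)$, with $H$ and $\tau_1$ as in \eqref{E:H} and \eqref{E:tau1}. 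Fixing any $T\in(0,\tau_1)$, this yields $M(\delta,t)\le \alpha\|\nabla\phi(x_0)\|/\bigl((\alpha+1)H(T)\bigr)$ for all $0<\delta\le t\le T$, which is the bound required by \eqref{hip:bound_M2}.

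Theorem \ref{theo:general2} then produces a solution $x^*$ defined on $[0,T]$ satisfying $x^*(0)=x_0$ and $\dot x^*(0)=0$. To extend $x^*$ to $[0,\infty)$, I would invoke standard ODE theory on the non-singular region $t\ge T$: the vector field in the first-order reformulation is locally Lipschitz, and the energy $\tfrac{1}{2}\|\dot x^*(t)\|^2+\phi(x^*(t))$ remains bounded (its derivative is $-(\alpha/t)\|\dot x^*\|^2-\beta\langle\nabla^2\phi(x^*)\dot x^*,\dot x^*\rangle\le 0$), which together with the $L$-Lipschitz continuity of $\nabla\phi$ precludes blow-up in finite time.

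The main obstacle I anticipate is purely bookkeeping: faithfully transferring the estimates of Section \ref{sec:technicalities}, which were written for trajectories initialized at $t=0$, to the $\delta$-shifted setting, and checking that the constants depend on $x_0$ only through $\|\nabla\phi(x_0)\|$ so the bound is genuinely uniform in $\delta$. No new analytical idea beyond Lemma \ref{lem:lemma1} should be needed.
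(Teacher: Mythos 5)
Your proposal is correct and follows essentially the same route as the paper: invoke Theorem \ref{theo:general2}, then verify hypothesis \eqref{hip:bound_M2} by reproducing the Lemma \ref{lem:lemma1} argument with the integration started at $\delta$, which yields $M_\delta(t)\le\norm{\nabla\phi(x_0)}/\bigl((\alpha+1)H(t)\bigr)$ for $t<\tau_1$ and hence a $\delta$-uniform bound on $M(\delta,t)$ (you even track the factor $\alpha$ in $M(\delta,t)=\alpha M_\delta(t)$ more carefully than the paper's statement, and your paragraph on continuing the solution past $T$ addresses a step the paper leaves implicit).
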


\begin{proof}
	According to Theorem \ref{theo:general2}, for the existence, it suffices to show that the expression $M(\delta,t)$, defined in \eqref{eq:def_M2}, is bounded for $0<\delta\le t\le T$, for some $T>0$. Mimicking the proof of Lemma \ref{lem:lemma1}, we show that
	$$H(t)M(\delta,t)\le \dfrac{\norm{\nabla \phi(x_0)}}{\alpha +1},\qquad\hbox{with}\qquad H(t)=1-\frac{\beta Lt}{\alpha+2}-\frac{Lt^2}{2(\alpha+3)}.$$
	The only positive zero of $H$ is $\tau_1$, given by \eqref{E:tau1}, and $H$ is decreasing on $(0,\tau_1)$. Hence, if $T<\tau_1$, then
	$$\sup_{0<\delta\le t\le T}M(\delta,t) \le \dfrac{\norm{\nabla \phi(x_0)}}{(\alpha +1)H(T)} < +\infty,$$
	as claimed.
\end{proof}

\begin{proposition}
	Equation \eqref{eq:din_avd}, with initial condition $x(0)=x_0$ and $\dot x(0)=0$, has at most one solution in a neighborhood of $t=0$.
\end{proposition}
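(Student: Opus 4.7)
Let $x_1, x_2$ be two solutions of \eqref{eq:din_avd} with $x_i(0)=x_0$ and $\dot x_i(0)=0$ on a common interval $[0,T]$, and set $w=x_1-x_2$, so that $w(0)=0$, $\dot w(0)=0$, and
$$\ddot w(t)+\frac{\alpha}{t}\dot w(t)+\big[\nabla\phi(x_1(t))-\nabla\phi(x_2(t))\big]+\beta\big[\nabla^2\phi(x_1(t))\dot x_1(t)-\nabla^2\phi(x_2(t))\dot x_2(t)\big]=0.$$
Multiplying by $t^\alpha$ and integrating on $[0,t]$ (all boundary contributions at $0$ vanish), I would recognize the Hessian term as $\frac{d}{du}\big[\nabla\phi(x_1(u))-\nabla\phi(x_2(u))\big]$ and integrate by parts, mirroring the device used in the existence argument. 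This produces
$$t^\alpha\dot w(t)=-\int_0^t u^\alpha\big[\nabla\phi(x_1)-\nabla\phi(x_2)\big](u)\,du-\beta t^\alpha\big[\nabla\phi(x_1(t))-\nabla\phi(x_2(t))\big]+\alpha\beta\int_0^t u^{\alpha-1}\big[\nabla\phi(x_1)-\nabla\phi(x_2)\big](u)\,du,$$
an identity in which only differences of $\nabla\phi$ (and no second derivatives) appear.

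Next, introduce $P(t)=\sup_{u\in[0,t]}\|\dot w(u)\|$, which is continuous, nondecreasing, and satisfies $P(0)=0$ since $\dot w$ is continuous on $[0,T]$ with $\dot w(0)=0$. From $\|w(u)\|\le\int_0^u\|\dot w(s)\|\,ds\le uP(u)\le uP(t)$ and the $L$-Lipschitz continuity of $\nabla\phi$, we obtain $\|\nabla\phi(x_1(u))-\nabla\phi(x_2(u))\|\le LuP(t)$. Substituting into the integral identity,
$$t^\alpha\|\dot w(t)\|\le \frac{Lt^{\alpha+2}}{\alpha+2}\,P(t)+\beta Lt^{\alpha+1}P(t)+\alpha\beta\cdot\frac{Lt^{\alpha+1}}{\alpha+1}\,P(t)\le \frac{Lt^{\alpha+2}}{\alpha+2}\,P(t)+2\beta Lt^{\alpha+1}P(t).$$
Dividing by $t^\alpha$ and taking the supremum over $u\in[0,t]$ (the right-hand side is nondecreasing in $t$), we arrive at the self-referential estimate
$$P(t)\le Lt\left[\frac{t}{\alpha+2}+2\beta\right]P(t).$$
For $t$ small enough, the bracketed factor is strictly less than $1$, so $P(t)=0$. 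Consequently $\dot w\equiv 0$ on some $[0,t_0]$, and since $w(0)=0$, this forces $w\equiv 0$ on $[0,t_0]$, yielding local uniqueness near $t=0$ (uniqueness for $t>0$ then follows from the classical Cauchy--Lipschitz theorem, since the ODE is regular away from the origin).

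The main obstacle is controlling the difference $\nabla^2\phi(x_1)\dot x_1-\nabla^2\phi(x_2)\dot x_2$: with only Lipschitz $\nabla\phi$ available (and \emph{no} Lipschitz assumption on $\nabla^2\phi$), there is no direct way to bound this pointwise in terms of $\|w\|$ and $\|\dot w\|$. The integration-by-parts trick bypasses this difficulty by trading the Hessian-velocity product for a derivative of $\nabla\phi$ at the price of boundary and lower-order terms, both of which are controlled by the Lipschitz constant $L$ alone. Once this reduction is made, the argument collapses into a standard Grönwall-type absorption in the weighted supremum $P(t)$.
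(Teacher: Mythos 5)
Your argument is correct and is essentially the same as the paper's: you set $w=x_1-x_2$, integrate by parts to convert the Hessian term into boundary and lower-order terms involving only differences of $\nabla\phi$, bound everything through the Lipschitz constant $L$ in terms of the nondecreasing supremum $P(t)=\sup_{u\in[0,t]}\|\dot w(u)\|$, and absorb to get $P\equiv 0$ on a short interval. The paper does precisely this with $\tilde M(t)$ in the role of your $P(t)$; the only cosmetic differences are that the paper retains the sharper coefficient $\tfrac{2\alpha+1}{\alpha+1}$ where you round it up to $2$, and that your closing appeal to Cauchy--Lipschitz away from $t=0$ is a bit quick (it would itself need the same integration-by-parts device if $\nabla^2\phi$ is merely continuous), but the paper tacitly makes the analogous uniqueness assumption for $\delta>0$ in its existence theorem.
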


\begin{proof}
	Let $x$ and $y$ satisfy \eqref{eq:din_avd} with the same initial state and null initial velocity. We define 
	$$\tilde{M}(t) = \sup_{u \in [0,t)}\lbrace \norm{\dot{x}(u)-\dot{y}(u)} \rbrace,$$
	and proceed as in the proof of Lemma \ref{L:I_and_J}, to obtain
	\begin{equation}\label{eq:gradient_mtilde}
		\norm{\nabla \phi(x(t)) - \nabla \phi(y(t))} 
		\leq L t \tilde{M}(t) 
	\end{equation}
	
	As $x$ and $y$ satisfy \eqref{eq:din_avd}, we  integrate by parts to obtain
	\begin{align*}
		t^\alpha (\dot{x}(t) - \dot{y}(t)) &= - \int_{0}^{t}u^\alpha \left( \nabla \phi(x(u)) - \nabla \phi(y(u)) \right) \, du  - \beta \int_{0}^{t}u^\alpha\left( \nabla^2 \phi(x(u))\dot{x}(u) - \nabla^2 \phi(y(u))\dot{y}(u) \right) \,du \\
		&= - \int_{0}^{t}u^\alpha \left( \nabla \phi(x(u)) - \nabla \phi(y(u)) \right) \, du  - \beta \int_{0}^{t}u^\alpha \dfrac{d}{du}\left( \nabla \phi(x(u)) - \nabla \phi(y(u)) \right)  \,du \\
		&= - \int_{0}^{t}u^\alpha \left( \nabla \phi(x(u)) - \nabla \phi(y(u)) \right) \, du - \beta t^\alpha \left( \nabla \phi(x(t)) - \nabla \phi(y(t)) \right) \\
		&\quad + \alpha \int_{0}^{t}u^{\alpha-1} \left( \nabla \phi(x(u)) - \nabla \phi(y(u)) \right) \, du.
	\end{align*}
	Using \eqref{eq:gradient_mtilde}, and the fact that $\tilde{M}(t)$ is increasing, we get 
	\begin{align*}
		t^\alpha \norm{\dot{x}(t) - \dot{y}(t)} &\leq \int_{0}^{t}L u^{\alpha+1} \tilde{M}(u) \, du + \beta L t^{\alpha+1} \tilde{M}(t) + \alpha\beta \int_{0}^{t}L u^\alpha \tilde{M}(u) \, du \\
		& \le \dfrac{1}{\alpha+2}L \tilde{M}(t) t^{\alpha+2} + \dfrac{2\alpha+1}{\alpha+1}\beta L \tilde{M}(t) t^{\alpha+1}.
	\end{align*}
	Then, 
	$$\norm{\dot{x}(t) - \dot{y}(t)} \leq \dfrac{1}{\alpha+2}L \tilde{M}(T) T^2 + \dfrac{2\alpha+1}{\alpha+1}\beta L \tilde{M}(T) T,$$
	whenever $0<t\le T$. Taking supremum, we conclude that
	$$Q(t)\tilde{M}(T)\le 0\qquad\hbox{with}\qquad Q(t)=1-\dfrac{2\alpha+1}{\alpha+1}\beta L t-\dfrac{1}{\alpha+2}Lt^2,$$
	for all $T>0$. Since $Q(T)>0$ in a neighborhood of $0$, it follows that $\tilde M$ must vanish there, whence $x$ and $y$ must coincide. 
\end{proof}
	\bibliographystyle{abbrv}
	\bibliography{referencias}
\end{document}